\documentclass[11pt,reqno]{amsart}

\usepackage[T1]{fontenc}
\usepackage{fourier,comment}
\usepackage{microtype}
\usepackage{amsmath,amssymb,amsthm,mathtools}
\usepackage{mathrsfs}
\usepackage[margin=1.05in]{geometry}
\usepackage{enumitem}
\usepackage{dutchcal}
\usepackage{upgreek}
\usepackage[mathscr]{euscript}
\usepackage[colorlinks=true,linkcolor=blue,citecolor=blue,urlcolor=blue]{hyperref}
\usepackage{graphicx}
\usepackage{float}

\newcommand{\X}{\mathbf X}

\makeatletter
\def\resetMathstrut@{%
  \setbox\z@\hbox{%
    \mathchardef\@tempa\mathcode`\(\relax
    \def\@tempb##1"##2##3{\the\textfont"##3\char"}%
    \expandafter\@tempb\meaning\@tempa \relax
  }%
  \ht\Mathstrutbox@1.2\ht\z@ \dp\Mathstrutbox@1.2\dp\z@
}
\makeatother

\newtheorem{theorem}{Theorem}[section]
\newtheorem{proposition}[theorem]{Proposition}
\newtheorem{lemma}[theorem]{Lemma}
\newtheorem{question}[theorem]{Question}
\newtheorem{problem}[theorem]{Problem}

\theoremstyle{remark}
\newtheorem{remark}[theorem]{Remark}

\newcommand{\E}{\mathbb{E}}

\newcommand{\supp}{\operatorname{supp}}
\newcommand{\1}{\mathbf{1}}

\newcommand{\Z}{\mathbb Z}
\newcommand{\R}{\mathbb R}
\newcommand{\N}{\mathbb N}

\newcommand{\TSP}{\operatorname{TSP}}

\renewcommand{\le}{\leqslant}
\renewcommand{\ge}{\geqslant}
\renewcommand{\leq}{\leqslant}
\renewcommand{\geq}{\geqslant}
\newcommand{\cc}{\mathsf{c}}

\newcommand{\sub}{\mathscr{C}}
\newcommand{\eqdef}{\stackrel{\mathrm{def}}{=}}
\newcommand{\f}{\varphi}
\newcommand{\MM}{\mathcal{M}}
\newcommand{\NN}{\mathcal{N}}
\renewcommand{\subset}{\subseteq}
\renewcommand{\Pr}{\mathbb{P}}
\newcommand{\ud}[0]{\,\mathrm{d}}
\newcommand{\fp}{\mathfrak{p}}
\newcommand{\diam}{\mathrm{diam}}
\renewcommand{\setminus}{\smallsetminus}
\newcommand{\bbX}{\mathbb{X}}

\title[Planar lamplighter is not of negative type]{Planar lamplighter is not of negative type}
\author[Antonelli G., Caputo E., Cavallucci N., Nalon L., Naor A., Wald P.]{Gioacchino Antonelli, Emanuele Caputo, Nicola Cavallucci, Luca Nalon, Assaf Naor, Pietro Wald}

\thanks{G.~A.~has been partially supported by the NSF  grant DMS-2550590. E.C. was supported by the Italian Ministry of University and Research (MUR) under the FIS 2 programme, project SingMeas: “Singular Structures in the Geometry of Measures: decompositions, rigidity and rectifiability” (project code FIS-2023-02725; CUP E53C25001800001). L.~N.~was supported by the Swiss National Science Foundation Postdoc.Mobility Fellowship	(project number P500-2235462 `\emph{Lie groups of polynomial growth}'). A.~N.~was supported  by NSF grant DMS-2453936 and a Simons Investigator award. P.~W.~was supported by the Warwick Mathematics Institute Centre
for Doctoral Training, and gratefully acknowledges funding from the University of Warwick and
the UK Engineering and Physical Sciences Research Council (Grant number: EP/W524645/1).}

\begin{document}
\begin{abstract}

The lamplighter group over the planar integer grid is proved to not be bi-Lipschitz equivalent to any metric space of negative type,  so  in particular  it does not admit a bi-Lipschitz embedding into $L_1$. This  shows the existence of finitely generated metabelian groups on which word metrics are never comparable up to constant factors to conditionally negative definite (CND) kernels, and that the property of admitting a word metric-comparable CND kernel is not preserved by wreath products.

\end{abstract}

\maketitle

\vspace{-0.2in}

\section{Introduction}

The main result herein is the following theorem:

\begin{theorem}\label{thm:MainIntro}  The metric space $(\Z_2\wr\Z^2,d_{\Z_2\wr\Z^2}^{1/2})$ does not admit a bi-Lipschitz embedding into $\ell_2$. In fact, for every integer $n\ge 2$ the Euclidean distortion of $(\Z_2\wr\Z_n^2,d_{\Z_2\wr\Z_n^2}^{1/2})$ is of order  $\sqrt{\log n}$. 
\end{theorem}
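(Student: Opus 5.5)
Both directions of the bound need proofs; the lower bound is the heart of the matter. The first assertion of Theorem~\ref{thm:MainIntro} follows from the second. The ball of radius $\asymp n$ in $\Z_2\wr\Z^2$ contains a copy of $\Z_2\wr\Z_n^2$ whose word metric is comparable to the restricted metric, up to universal constant factors. A bi-Lipschitz embedding of the snowflake $d_{\Z_2\wr\Z^2}^{1/2}$ into $\ell_2$ would therefore give a uniform bound on the Euclidean distortion of $(\Z_2\wr\Z_n^2,d^{1/2})$, contradicting the $\sqrt{\log n}$ lower bound. So the task is to show that this distortion is $\gtrsim\sqrt{\log n}$ and $\lesssim\sqrt{\log n}$.

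\emph{Upper bound.} I would use the standard decomposition of the lamplighter metric. For $x=(f,p)$ and $y=(g,q)$, the distance $d(x,y)$ is comparable to $|f\triangle g| + \TSP(p,\supp(f\triangle g),q)$. The first term is a Hamming (cut) metric, so it is of negative type. For the second term I would invoke Arora's theorem that $n$-point subsets of $L_1$ embed in $L_2$ with distortion $O(\sqrt{\log n}\,\log\log n)$. That theorem is not sharp enough, however, and the right tool is a Lee--Naor-type embedding for doubling spaces. The TSP term on the grid can be approximated, at each dyadic scale $2^k$, by the number of occupied $2^k$-cells times $2^k$. Each scale term is a cut-type (hence negative-type) function of $f\triangle g$. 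Summing over the $\log n$ scales and applying Cauchy--Schwarz costs a factor $\sqrt{\log n}$ in the square-root metric. I expect this to yield an embedding of $d^{1/2}$ into $\ell_2$ with distortion $O(\sqrt{\log n})$.

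\emph{Lower bound.} This is the main obstacle. I would prove a Poincaré/Markov-type inequality in which the square-root metric is averaged over a well-chosen family of pairs. Any $h:\Z_2\wr\Z_n^2\to\ell_2$ satisfies
\[
\E\|h(x)-h(x\sigma)\|^2 \lesssim \sum_{k\le\log n} \frac{1}{?}\E\|h(x)-h(x s_k)\|^2,
\]
and the right-hand side combines "short" moves at each scale with a total weight of order $1/\log n$ relative to the metric. The intended mechanism is this. Lamp configurations that differ on a random set of $\asymp m$ lamps spread over an $m\times m$ box are at distance $\asymp m^2$, because the lighter must tour them. In Hilbert space, by contrast, Fourier analysis on $\Z_2^{\Z_n^2}$ (Enflo-type and Bourgain-type arguments for the hypercube) lets each lamp flip be charged independently. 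The planar geometry forces the TSP cost at scale $2^k$ to be additive across $\log n$ scales. This is the famous $\sqrt{\log n}$ phenomenon for the planar TSP, and in dimension $1$ it would fail. Concretely, I would use the Fourier--Walsh decomposition in the lamp coordinates combined with a heat-semigroup / martingale argument in the base $\Z_n^2$. The aim is to show that for each Walsh level the squared-norm deviation is controlled by a sum over scales, with each scale contributing equally. Squaring the snowflake metric gives $d$ itself, so the comparison between $\sum_k$ (negative-type side) and the genuinely quadratic $\ell_2$ side produces the gap. Making the scale-by-scale orthogonality rigorous is where I expect the difficulty to lie. I would first try to establish it by discrete Littlewood--Paley decomposition in $\Z_n^2$, with the lamplighter moves at scale $2^k$ treated as a random walk of step $2^k$.
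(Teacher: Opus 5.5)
The lower bound, which you rightly call the heart of the matter, is not actually proved. The displayed inequality has an unspecified weight ``$\frac{1}{?}$'' and unspecified moves $\sigma,s_k$, and the standard inequalities you allude to do not detect a $\log n$ loss.
\begin{itemize}
\item Markov type $2$ is vacuous for a $\frac12$-snowflake. For a stationary reversible chain it only yields $\E d(Z_t,Z_0)\lesssim D^2t\,\E d(Z_1,Z_0)$, which the triangle inequality and stationarity already give with $D=1$. So treating lamplighter moves at scale $2^k$ as a random walk cannot produce a contradiction.
\item Enflo's inequality on the rescaled Hamming cubes $x\mapsto(\{1,\ldots,n\}\times\supp(x),0)$, $x\in\{0,1\}^n$, inside the zero section only gives $n\cdot n\lesssim D^2\,n\cdot n$, which is consistent with $D=O(1)$.
\end{itemize}
Your heuristic is also off. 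Any $m$ lamps in an $m\times m$ box have tour length $O(m^{3/2})$, not $\asymp m^2$; the relevant fact is that a $\fp$-random subset of an $n\times n$ box costs $\asymp n^2\sqrt{\fp}$. What is missing is a concrete quadratic inequality, valid for all $\ell_2$-valued maps, whose two sides separate by a factor $\log n$ once the lamplighter metric is substituted. Without it there is no lower bound.

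The missing idea is Proposition~\ref{prop:quad-crossIntro}, which lives entirely on the zero section, i.e.\ on the hypercube $\Z_2^{\Z_n^2}$. It says that $\fp$-biased Bernoulli noise, aggregated over all densities with weight $\fp^{-3/2}\ud\fp$, is dominated by $n$ times the perturbation by a uniformly random cross $\sub_{ij}$ whose points are kept by fair coins.
\begin{itemize}
\item Because the inequality is quadratic, it suffices to check it on Walsh functions $W_A$. The left side is $\asymp\sqrt{|A|}$ since $\E W_A(\nu^\fp)=(1-2\fp)^{|A|}$.
\item The right side is $2n\big(1-(1-|A_1|/n)(1-|A_2|/n)\big)\ge 2\max\{|A_1|,|A_2|\}\ge 2\sqrt{|A|}$, because $A\subseteq A_1\times A_2$. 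This is exactly where planarity enters.
\item On the metric side, every realization of the sparsified cross has tour length $O(n)$. Meanwhile $\fp$-noise has expected tour length $\gtrsim n^2\sqrt{\fp}$ for $\fp\ge n^{-2}$, so the left side is $\gtrsim n^2\int_{n^{-2}}^1\fp^{-1}\ud\fp\asymp n^2\log n$, against $O(D^2n^2)$ on the right.
\end{itemize}
So the equal contribution of every scale that you are after comes from the noise densities $\fp$, not from a Littlewood--Paley decomposition of the base.

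Two smaller points. First, your reduction asserts without proof an $O(1)$-distorted copy of all of $\Z_2\wr\Z_n^2$ inside $\Z_2\wr\Z^2$. Only the zero section is needed, and even that requires the argument of Lemma~\ref{lem:embed in each}: a torus tour that wraps around has length $\gtrsim n$, so adding the boundary of the box reconnects the unwrapped pieces. Second, for the upper bound, Arora and Lee--Naor are beside the point. Compose an $O(\log n)$-distortion $L_1$ embedding with the isometric embedding of $(L_1,\|\cdot\|_1^{1/2})$ into $L_2$; this gives $O(\sqrt{\log n})$ directly, with no Cauchy--Schwarz. The paper cites \cite[Corollary~8]{BaudierMotakisSchlumprechtZsak2022} for the $L_1$ embedding. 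Your sum over scales of cut metrics is in that spirit, but it does not handle cells near the base point or distinct endpoints $p\ne q$.
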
 
See Section~\ref{sec:Preliminaries}  for  (basic and standard) definitions and terminology on metric embeddings and lamplighter groups. In particular,  for concreteness we  write  $d_{\Z_2\wr \Z^2}$ and $d_{\Z_2\wr \Z_n^2}$ for  the word metric on, respectively,  $\Z_2\wr \Z^2$ and $\Z_2\wr \Z_n^2$ that is induced by their standard generating sets, even though our results hold  for any choice of generating set (changing generators  can only influence implicit universal constants). 

The following problem was posed in~\cite{NaorPeres2011} (see specifically Question 10.1 of~\cite{NaorPeres2011}, as well as  the discussion of its motivation on page~57 there), and  reiterated in e.g.~\cite{BaudierMotakisSchlumprechtZsak2022,GartlandRandrianantoaninaRandrianarivony2026}:   
\begin{problem}\label{quest:NaorPeres}
    Determine whether or not $\Z_2\wr\Z^2$ admits a bi-Lipschitz embedding into $L_1$.
\end{problem}

Because $L_1$ equipped with the metric $\sqrt{\|\cdot-\cdot\|_1}$ embeds isometrically into $\ell_2$~\cite{Schoenberg1938}, Theorem~\ref{thm:MainIntro} resolves Problem~\ref{quest:NaorPeres}. In terms of distortion growth, we get the following result and open question.  For every integer  $n\ge 3$, let $\cc_1^n(\Z_2\wr\Z^2)$ denote the largest possible $L_1$ distortion of an $n$-point subset of $\Z_2\wr\Z^2$. We prove herein that  $\cc_1^n(\Z_2\wr\Z^2)\gtrsim \log\log n$,\footnote{We use the following (standard) conventions for asymptotic notation, in addition to the usual $O(\cdot), \Omega(\cdot)$ notation. Given $a, b > 0$, by writing $a\lesssim b$ or $b \gtrsim a$ we mean that $a\le \kappa b$ for some universal constant
$\kappa>0$, and $a\asymp b$ stands for $(a\lesssim b) \wedge  (b\lesssim  a)$. } and ask if this is sharp (we suspect that it is):

\begin{question}  Is it true that $\cc_1^n\big(\Z_2\wr\Z^2\big)\asymp \log\log n$ for every integer $n\ge 3$?

\end{question}

\smallskip

By~\cite{Sch35,Sch37}, a metric $d$ on a set $\MM$ has the property that $(\MM,\sqrt{d})$ embeds isometrically into a Hilbert space if and only if $d$ is a {\em conditionally negative (semi)definite} (CND) kernel on $\MM$, namely, for every $n\in \N$ and every $x_1,\ldots,x_n\in \MM$ the $n$-by-$n$ matrix $(d(x_i,x_j))$ is negative semidefinite on the orthogonal complement of the constant vectors in $\mathbb{C}^n$; the latter property for $d$ is commonly called {\em negative type}. Thus, given a finitely generated group $G$ equipped with a word metric $d$  that is induced by some finite generating set, asking if  $(G,\sqrt{d})$ admits a bi-Lipschitz embedding into a Hilbert space is equivalent to asking if there is a CND kernel on $G$ that is bounded from above and from below by positive constant multiples of $d$ (those multiples may depend on the  generating set but not on the group elements whose distance is being evaluated). We will henceforth call such groups   {\em {CND}-comparable groups}, which is legitimate  as the aforementioned property is independent of the choice of the generating set.  

   Groups that are known to be CND-comparable include: Coxeter groups~\cite{BJS88}; groups of polynomial growth~\cite{Assouad1983}; Thompson's  group F~\cite{Bur99} (see also~\cite[Theorem 1.5]{ArzhantsevaGubaSapir2006}); hyperbolic groups~\cite{BuyaloDranishnikovSchroeder2007}; wreath products of the form $G\wr H$ where $G$ has a linearly proper measured-wall metric and $H$ either has linear growth~\cite{NaorPeres2008} or is a free group~\cite{CornulierStalderValette2012}; mapping class groups~\cite{Hum17} (see also~\cite{BBF15} for a key precedent). More such examples can be found in~\cite{NR97,ArzhantsevaGubaSapir2006,HS13,Pet26}. 
  
  Non-CND-comparable groups are known to exist: the first such construction is due to~\cite{Gromov_Random} (see also its strengthening~\cite{Osajda2020}), and more examples were constructed in~\cite{ArzhantsevaDrutuSapir2009}, as well as~\cite{AustinAmenable} (see also the generalization in~\cite{OO13}) and~\cite{BrieusselZheng2021}. The examples of~\cite{Gromov_Random,ArzhantsevaDrutuSapir2009} are not amenable, \cite{AustinAmenable} constructed a non-CND-comparable amenable group which is, in fact, solvable of derived length $4$, and~\cite{BrieusselZheng2021} obtained a non-CND-comparable group that is solvable of derived length $3$.  
  
  Theorem~\ref{thm:MainIntro} says that $\Z_2\wr \Z^2$ is not CND-comparable, the first  such  group that is solvable of derived length $2$, i.e., metabelian (this is optimal as Abelian finitely generated groups are  CND-comparable).   Moreover, $\Z_2\wr \Z^2$ is the first ``classical'' group that is known not to be CND-comparable, as the aforementioned examples are  groups that were specially constructed for such purposes. One can mechanically produce more such groups from the ensuing proof of Theorem~\ref{thm:MainIntro}; see Remark~\ref{rem:MoreGeneralGroups} below.

Theorem~\ref{thm:MainIntro} is also a new demonstration of the distinction between  being a CND-comparable group and having attained Hilbert compression exponent $1/2$ (the notion of compression exponent~\cite{GK04} is recalled in Section~\ref{sec:Preliminaries}). This was known by~\cite{ArzhantsevaDrutuSapir2009} for non-amenable groups, and~\cite{BrieusselZheng2021} proved it for solvable groups of derived length  3. By~\cite{NaorPeres2011}, the Hilbert compression exponent of $\Z_2\wr \Z^2$ equals $1/2$, and the supremum in the definition of this exponent is attained, i.e., there is a Lipschitz  $f:\Z_2\wr \Z^2\to \ell_2$ satisfying $\|f(x)-f(y)\|_2\gtrsim d_{\Z_2\wr \Z^2}(x,y)^{1/2}$ for every $x,y\in \Z_2\wr \Z^2$.  At the same time,  Theorem~\ref{thm:MainIntro} says that $\Z_2\wr \Z^2$ is not CND-comparable, so the two notions can differ even for classical metabelian groups.

Finally, Theorem~\ref{thm:MainIntro}  contributes to the literature on group properties that wreath products preserve. By~\cite{CornulierStalderValette2012}, wreath products preserve the Haagerup property and coarse embeddability into Hilbert space. By~\cite{Li2010}, having a positive Hilbert compression exponent is also preserved under wreath products. Theorem~\ref{thm:MainIntro} shows that wreath products need not preserve the property of being  CND-comparable.

\subsection{On the proof of Theorem~\ref{thm:MainIntro}}\label{sec:proof sketch} Proposition~\ref{prop:quad-crossIntro} below is a Poincar\'e-type inequality that we will prove in Section~\ref{sec:cross proof}  . For its formulation, we will first introduce the following  simple notation. Given $0\le \fp \le 1$, we will let $b^\fp$ be a $\fp$-biased Bernoulli random variable, namely, $b^\fp$ is distributed over $\{0,1\}$ and it takes the value  $1$ with probability $\fp$. We will also let $b=b^{1/2}$ be a standard Bernoulli random variable, namely, it is distributed uniformly over $\{0,1\}$.  Given $n\in \N$ and $(i,j)\in \Z_n^2$, denote: 
\begin{equation}\label{eq:def random cross}
\sub_{ij}\eqdef  \big(\{i\}\times \Z_n\big)\cup \big(\Z_n\times \{j\}\big).
\end{equation}
Finally, letting $\{e_y\}_{y\in \Z_n^2}$ be the standard  basis of $\Z_2^{\Z_n^2}$, consider the following random vectors $\nu^\fp,\gamma\in \Z_2^{\Z_n^2}$:
\begin{equation}\label{eq:random vectors}
\nu^\fp\eqdef \sum_{y\in \Z_n^2}b_y^{\fp}e_y  \qquad\mathrm{and}\qquad  \gamma\eqdef \sum_{y\in \sub_{ij}}b_{y}e_{y},
\end{equation} 
where $\{b^\fp_y\}_{y\in \Z_n^2}$ are  i.i.d. copies of $b^\fp$, and  $(i,j)\in \Z_n^2$ is distributed  uniformly over $\Z_n^2$ and is independent of $\{b_y\}_{y\in \Z_n^2}$, which are   i.i.d.~copies of  $b$.  So, $\nu^\fp$ is $\fp$-Bernoulli noise, namely, it is a Boolean vector  with independent coordinates, each of which is nonzero with probability $\fp$. The random vector $\gamma$ is the indicator of the random subset of an axis-parallel cross with uniformly random center that is obtained by retaining each of its elements independently with probability $1/2$.

\begin{proposition}\label{prop:quad-crossIntro}
Every $\f:\mathbb Z_2^{\mathbb Z_n^2}\to \ell_2$ satisfies: 
\begin{equation}\label{eq:bernoulli}
\int_0^1 \E\big[\|\f(x+\nu^\fp)-\f(x) \|_2^2\big]\frac{\ud \mathfrak{p}}{\fp^{\frac32}}\lesssim n \E\big[\|\f(x+\gamma)-\f(x) \|_2^2\big].
\end{equation}
\end{proposition}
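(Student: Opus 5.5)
The plan is to diagonalize both sides of \eqref{eq:bernoulli} in the Walsh basis of $\Z_2^{\Z_n^2}$ and compare the resulting multipliers frequency by frequency. Throughout, $x$ is uniformly distributed over $\Z_2^{\Z_n^2}$ and independent of the noise. For $S\subset \Z_n^2$ let $W_S(x)=(-1)^{\sum_{y\in S}x_y}$, and expand $\f=\sum_{S}\hat\f(S)W_S$ with $\hat\f(S)\in\ell_2$. For any random vector $z\in\Z_2^{\Z_n^2}$ independent of $x$, Parseval gives
\begin{equation*}
\E\big[\|\f(x+z)-\f(x)\|_2^2\big]=\sum_{S\subset \Z_n^2}\|\hat\f(S)\|_2^2\cdot 2\Big(1-\E\big[W_S(z)\big]\Big).
\end{equation*}
This uses that $W_S(x+z)=W_S(x)W_S(z)$, that the $W_S$ are orthonormal, and that $\E[W_S(z)]$ is real. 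It therefore suffices to show, for every nonempty $S$ (the empty set contributes zero to both sides), that the multiplier on the left of \eqref{eq:bernoulli} is $\lesssim n$ times the multiplier on the right.

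\emph{Left side.} The coordinates of $\nu^\fp$ are independent, so $\E[W_S(\nu^\fp)]=(1-2\fp)^{|S|}$. Writing $k=|S|\ge 1$, we need to bound
\begin{equation*}
\int_0^1\big(1-(1-2\fp)^k\big)\frac{\ud\fp}{\fp^{3/2}}.
\end{equation*}
The integrand satisfies $0\le 1-(1-2\fp)^k\le \min\{2,2k\fp\}$ for all $\fp\in[0,1]$; the sign oscillation of $(1-2\fp)^k$ when $\fp>1/2$ is harmless for this upper bound. Hence the integral is at most
\begin{equation*}
2k\int_0^{1/k}\fp^{-1/2}\ud\fp+2\int_{1/k}^1\fp^{-3/2}\ud\fp\lesssim \sqrt{k}.
\end{equation*}
So the left multiplier is $\lesssim\sqrt{|S|}$.

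\emph{Right side.} Condition on the center $(i,j)$. Since the $b_y$ are independent fair bits, $\E[W_S(\gamma)\mid i,j]=\prod_{y\in S\cap\sub_{ij}}\E[(-1)^{b_y}]$. This equals $1$ if $S\cap\sub_{ij}=\emptyset$ and $0$ otherwise. So the right multiplier is $2n\,\Pr[S\cap\sub_{ij}\neq\emptyset]$.

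Now let $a=|\pi_1(S)|$ and $b=|\pi_2(S)|$ be the sizes of the two coordinate projections of $S$. The cross $\sub_{ij}$ meets $S$ whenever $i\in\pi_1(S)$ or $j\in\pi_2(S)$ (in fact if and only if). Therefore $\Pr[S\cap\sub_{ij}\ne\emptyset]\ge \max\{a,b\}/n$. On the other hand $S\subset \pi_1(S)\times\pi_2(S)$, so $|S|\le ab\le\max\{a,b\}^2$. Combining,
\begin{equation*}
\sqrt{|S|}\le \max\{a,b\}\le n\Pr\big[S\cap \sub_{ij}\neq\emptyset\big],
\end{equation*}
which is exactly the required frequency-wise comparison. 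Summing over $S$ with weights $\|\hat\f(S)\|_2^2$ then yields \eqref{eq:bernoulli}.

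The only step needing any care is the elementary estimate of the $\fp$-integral by $\sqrt{k}$, specifically the split at $\fp=1/k$. The geometric input is simply that a set of size $k$ in the grid has a projection of size at least $\sqrt{k}$, and a uniformly random cross detects that projection with probability at least $\sqrt{k}/n$.
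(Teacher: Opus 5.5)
Your proof is correct and follows essentially the same route as the paper: you reduce to Walsh characters, bound the $\fp$-integral by $O(\sqrt{|S|})$, and use that a uniformly random cross meets $S$ with probability at least $\max\{|\pi_1(S)|,|\pi_2(S)|\}/n\ge\sqrt{|S|}/n$, because $S\subset\pi_1(S)\times\pi_2(S)$. The only cosmetic differences are on the paper's side. It computes $\Pr[S\cap\sub_{ij}=\emptyset]=(1-a/n)(1-b/n)$ exactly, and it proves a two-sided estimate $\asymp\sqrt{|S|}$ for the integral, whereas you prove only the one-sided bounds, which is all that is needed.
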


In words, the left hand side of~\eqref{eq:bernoulli} is a suitable aggregate  over $0<\fp<1$ of the expected square displacement of $\f$ when one perturbs its uniformly random input by  $\fp$-Bernoulli noise. Proposition~\ref{prop:quad-crossIntro} asserts that this aggregate displacement can never exceed  $n$ times the  expected displacement of $\f$ when one perturbs its uniformly random input by a random vector whose coordinates vanish outside the random cross in~\eqref{eq:def random cross} with its center $(i,j)$  uniformly  random, and the coordinates within it are i.i.d.~standard Bernoulli. 

The role of  the cross in Proposition~\ref{prop:quad-crossIntro} mimics its  use  in~\cite{gartlandOstrovskii2026}, though~\cite{gartlandOstrovskii2026}  treats a multi-scale union of crosses, which is a feature that we do not need herein; instead  Proposition~\ref{prop:quad-crossIntro} considers a random sparsification of a randomly shifted cross.   

It suffices to prove~\eqref{eq:bernoulli}  for characters of the Abelian group $\Gamma_n=\Z_2^{\mathbb Z_n^2}$ (an $n^2$-dimensional Boolean hypercube), since those characters form an orthonormal basis of $L_2(\Gamma_n)$ and~\eqref{eq:bernoulli} is quadratic. Checking that~\eqref{eq:bernoulli} indeed holds for the characters of $\Gamma_n$  is a  simple exact computation, carried out in Section~\ref{sec:cross proof}. Our inspiration to consider the quadratic inequality~\eqref{eq:bernoulli}  comes from~\cite{PythHyper}, where similarly working with a quadratic inequality led to a simple proof of a statement about  $L_1$ nonembeddability.

With Proposition~\ref{prop:quad-crossIntro} at hand, the proof of the second part of Theorem~\ref{thm:MainIntro} proceeds as follows. Its stated upper bound on the Euclidean distortion is known (see~\cite[Corollary 8]{BaudierMotakisSchlumprechtZsak2022} or~\eqref{eq:lp version} below), so we need to prove  the matching lower bound. Suppose that $D\ge 1$ and  $f:\Z_2\wr \Z_n^2\to \ell_2$ satisfy: 
\begin{equation}\label{eq:D distortion assumption}
\forall x,y\in \Z_2\wr\Z_n^2,\qquad d_{\Z_2\wr \Z_n^2}(x,y)^{\frac12}\le \|f(x)-f(y)\|_2\le Dd_{\Z_2\wr \Z_n^2}(x,y)^{\frac12}.
\end{equation}
The task is to deduce that $D$ must be at least a positive universal constant multiple of $\sqrt{\log n}$. This conclusion holds, in  fact,  for the restriction of $f$ to the zero section $(\Z_2\wr \Z_n^2)_0=\{(A,0):\ A\subseteq \Z_n^2\}\triangleleft\Z_2\wr \Z_n^2$, consisting of all those lamp configurations for which the lamplighter is at the identity element $0$ of $\Z_n^2$. 

Indeed, define $\f:\mathbb Z_2^{\mathbb Z_n^2}\to \ell_2$ by setting $\f(x)=f(\supp(x),0)$ for  $x=(x_y)_{y\in \Z_n^2}\in \Z_2^{\mathbb Z_n^2}$, where $\supp(x)$ is the support $\{y\in \Z_n^2: x_{y}=1\}$ of $x$. Note that this $\f$ probes the values of $f$ only  on the zero section $(\Z_2\wr \Z_n^2)_0$. A  substitution of~\eqref{eq:D distortion assumption} into~\eqref{eq:bernoulli} gives:
\begin{equation}\label{eq:substitute into Poincare}
\int_0^1 \E \Big[d_{\Z_2\wr \Z_n^2} \big((\supp(\nu^{\fp}),0),(\emptyset,0)\big)\Big]\frac{\ud \mathfrak{p}}{\fp^{\frac32}} \lesssim nD^2 \E\Big[d_{\Z_2\wr \Z_n^2}\big((\supp(\gamma),0\big),(\emptyset,0)\big)\Big].
\end{equation}

The lamplighter distances that appear in~\eqref{eq:substitute into Poincare} satisfy the following estimates: 
\begin{equation}\label{eq:the TSP computations needed}
d_{\Z_2\wr \Z_n^2} \big((\supp(\gamma),0),(\emptyset,0)\big)\lesssim n\qquad\mathrm{and}\qquad \forall \frac{1}{n^2}\le \fp\le 1,\quad \E \Big[d_{\Z_2\wr \Z_n^2} \big((\supp(\nu^\fp),0),(\emptyset,0)\big)\Big]\gtrsim n^2\sqrt{\fp}.
\end{equation}
The proof of~\eqref{eq:the TSP computations needed} is included in Section~\ref{sec:TSP computation} below; the first part of~\eqref{eq:the TSP computations needed} is deterministic and immediate to check, and the second part of~\eqref{eq:the TSP computations needed} has a quick justification. By~\eqref{eq:the TSP computations needed} the left hand side of~\eqref{eq:substitute into Poincare} satisfies: 
$$
\int_0^1 \E \Big[d_{\Z_2\wr \Z_n^2} \big((\supp(\nu^{\fp}),0),(\emptyset,0)\big)\Big]\frac{\ud \mathfrak{p}}{\fp^{\frac32}} \gtrsim n^2\int_{\frac{1}{n^2}}^1\frac{\ud\fp}{\fp}\asymp n^2\log n.
$$
At the same time,  by~\eqref{eq:the TSP computations needed} the right hand side is of~\eqref{eq:substitute into Poincare}  is  $O(D^2n^2)$. Consequently, $D\gtrsim \sqrt{\log n}$, as required.

The deduction of the first part of Theorem~\ref{thm:MainIntro} (about $\Z_2\wr \Z^2$) from its second part (about  $\Z_2\wr \Z_n^2$) is a consequence of the following lemma, whose proof appears in Section~\ref{sec:bi-embedding sec for reduction}  below:

\begin{lemma}\label{lem:embed in each} For $n\in \N$, let $\mathbb{X}_n\eqdef\{(A,0)\in (\Z_2\wr\Z^2)_0:\ A\subset [-n,n]^2\}$, equipped with the metric of $\Z_2\wr \Z^2$. Then, $(\Z_2\wr\Z_n^2)_0$ embeds with distortion $O(1)$ into $\mathbb{X}_n$. Conversely, $\mathbb{X}_n$ embeds isometrically into $(\Z_2\wr \Z_{4n}^2)_0$. 
\end{lemma}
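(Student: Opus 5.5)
We will rely throughout on the standard formula for lamplighter distances. For finite $A,B$ in a Cayley graph $H$, with the lamplighter returning to $0$,
\[
d_{\Z_2\wr H}\big((A,0),(B,0)\big)=|A\triangle B|+\TSP_H\big(\{0\}\cup (A\triangle B)\big),
\]
where $\TSP_H(S)$ is the length of the shortest closed walk in $H$ through all points of $S$. Hence on zero sections both parts of the lemma reduce to comparing closed-tour lengths in $\Z^2$ with those in the discrete torus. Identify $\Z_m^2$ with the box $\{0,\ldots,m-1\}^2$, or with a centered box.

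\emph{Isometric part.} Embed $\mathbb{X}_n$ into $(\Z_2\wr\Z_{4n}^2)_0$ by reducing $[-n,n]^2$ modulo $4n$. This map is injective on lamp sets, so it preserves $|A\triangle B|$. The quotient map $\Z^2\to\Z_{4n}^2$ is $1$-Lipschitz, so the torus tour length is at most the planar one. For the reverse inequality, take a closed walk in $\Z_{4n}^2$ of length $L$ through $0$ and the points of $S\subset[-n,n]^2$, and lift it to $\Z^2$ starting at $0$. The lift visits lifts of the points of $S$, but possibly translates $s+4nk$, and it may end at $4nk\neq 0$. We deal with this in two cases.

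If $L<4n$, every visited point lies within distance $<2n$ of $0$ in each coordinate. Since $S$ lies in $[-n,n]^2$, the lifted copy of each $s$ within distance $<2n$ of $0$ is unique and equals $s$ itself. The lifted walk then closes up in $\Z^2$ as well, because its endpoint is a lift of $0$ within $\ell_\infty$-distance $<2n$ of $0$.

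If $L\ge 4n$, this case needs care. We compare the planar side directly: each $\TSP$ quantity in $[-n,n]^2$ is at most of order $|S|\cdot$ (spacing), and it is not obviously at most $L$. To make this case go through, I would first try to replace $4n$ by the torus with cell size chosen so that points of the box are at torus distance equal to planar distance, that is, $\ell_1$ distance at most $4n$. The walk then decomposes at points of $S$ into segments, each of which can be replaced by a planar geodesic between consecutive points of $S$. Each torus segment has length at least the torus distance, which equals the planar distance when the box sits inside a torus of side $4n>2\cdot 2n$. Concatenating these geodesics gives a planar closed tour of length at most $L$. This segment argument in fact works without the case split: it only needs the torus metric to agree with the planar $\ell_1$ metric on $\{0\}\cup S$, which holds because coordinate differences are at most $2n\le 4n/2$. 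This gives isometry.

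\emph{Distortion $O(1)$ part.} Map $A\subset\Z_n^2=\{0,\ldots,n-1\}^2$ to $\Psi(A)\subset[-n,n]^2$ by the identity on representatives. Lamp counts are preserved, and torus tours are at most planar tours. It remains to show $\TSP_{\Z^2}(\{0\}\cup S)\lesssim |S|+\TSP_{\Z_n^2}(\{0\}\cup S)$. Take an optimal torus tour, split it at wrap-arounds, and lift it. Each time the lift crosses a seam, it can be redirected back into the box, but this redirection can cost about $n$ per crossing, which is too expensive. To avoid this, I would instead use a standard folding trick: map $\Z_n^2$ onto the box via the reflection-doubling map. Equivalently, choose the embedding as $A\mapsto$ the union of its copies under unfolding. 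A cleaner route is to use the $1$-Lipschitz folding map $\Z\to\{0,\ldots,n\}$ (the triangle wave of period $2n$), which sends walks to walks. Compose with a $2$-to-$1$ correspondence: embed $\Z_n$ into the box $[0,2n)$ doubled, so that the folding of a planar walk relates to the torus. I expect this step, converting torus tours into planar tours with only an $O(1)$ loss, to be the main obstacle. The fallback bound is $\TSP_{\Z^2}\le\TSP_{\Z_n^2}+O(n)$, which is harmless when $\TSP_{\Z_n^2}\gtrsim n$. When $\TSP_{\Z_n^2}<n/4$, the tour cannot wrap around, so it lifts with no loss. Combining these two cases gives the constant-distortion bound.
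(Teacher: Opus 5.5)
\paragraph{Isometric half.} This half is fine. Once you drop the case split, it is exactly the paper's argument: reduction mod $4n$ is injective, it is an isometry on $[-n,n]^2$ because all coordinate differences are at most $2n$, and $\TSP$ depends only on the metric restricted to $\{0\}\cup S$.

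\paragraph{The $O(1)$-distortion half has a genuine gap, in two places.}

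\emph{The map you chose is not bi-Lipschitz.} Taking the identity on representatives $\{0,\ldots,n-1\}^2$ puts the base point $0$ at a corner of the box, so very short torus tours do cross the seam. For $A=\{(n-1,0)\}$ one has $d_{\Z_2\wr\Z_n^2}\big((A,0),(\emptyset,0)\big)=1+2=3$. The image of $(A,0)$, however, lies at distance $1+2(n-1)=2n-1$ from $(\emptyset,0)$ in $\Z_2\wr\Z^2$. So your claim that a torus tour of length $<n/4$ cannot wrap around is false for this map, and its distortion is of order $n$. You need centered representatives, as the paper uses with $\lambda_n:\Z_n\to[-\lfloor n/2\rfloor,\lceil n/2\rceil-1]$. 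With these, any seam crossing forces a closed tour from $0$ to have length $\gtrsim n$, and a tour with no crossing lifts exactly.

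\emph{The key step is asserted, not proved.} Even after that fix, the ``fallback bound'' $\TSP_{\Z^2}\le\TSP_{\Z_n^2}+O(n)$ (up to a constant factor) is precisely the crux of the lemma. You state it with no argument, right after observing that redirecting the lift at each seam crossing costs order $n$ per crossing. The folding discussion does not supply it either.

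\paragraph{Missing idea: a connectivity argument.}
\begin{enumerate}[label=(\roman*)]
\item Lift the optimal torus tour $x_0,\ldots,x_\ell$ pointwise via $\lambda_n^{\otimes 2}$.
\item Discard the ``transition'' steps, namely those whose lifted endpoints are not adjacent in $\Z^2$. Both endpoints of such a step lie on the boundary of the box.
\item Every connected component of the remaining graph meets that boundary. If a component $C$ avoided it, every step of the tour entering or leaving $C$ would be a non-transition and so would stay in $C$. Then $C$ would contain the whole lifted tour, and there would be no transition at all.
\item Adding the $O(n)$ boundary edges of the box therefore gives a connected graph with $O(\ell+n)=O(\ell)$ edges, containing $0$ and $\lambda_n^{\otimes 2}(S)$.
\item Traversing a spanning tree of this graph (or doubling its edges and taking an Euler circuit) gives a closed covering walk in $\Z^2$ of length $O(\ell)$.
\end{enumerate}
With centered representatives and this argument, your two-case plan (no crossing: exact lift; some crossing: $\ell\gtrsim n$ and the bound above) goes through.
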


The   antecedent~\cite{gartlandOstrovskii2026} of the reasoning herein has a higher dimensional sequel~\cite{GartlandOstrovskiiRabaniYoung2026} that uses random
measures supported on dyadic cubes to obtain bounds that are sharp also in terms of dimension.  Inspired by this, we ask if  for every $m\geq 2$, any embedding of $\Z_2\wr \Z_n^m$ into $L_1$ incurs distortion that is at least a positive universal constant multiple of $m\log n$. This would be sharp by~\cite[Corollary 8]{BaudierMotakisSchlumprechtZsak2022}.

\subsection{Further results and questions} For a discrete metric space $\MM$ and $q\ge 1$, let  $\alpha_q^*(\MM)$ denote its $L_q$ compression exponent; the definition is recalled in Section~\ref{sec:Preliminaries}. By~\cite{NaorPeres2011}, $\alpha_q^*(\Z_2\wr\Z^2)=\max\{1/q,1/2\}$, and furthermore, if $q>1$, then  $\alpha_q^*(\Z_2\wr\Z^2)$ is attained. Thus, for  $1<q\le 2$ we can define $\kappa(q)>0$ to be the supremum of those $\kappa>0$ for which there is a $1$-Lipschitz map $f:\Z_2\wr\Z^2\to L_{q}$ satisfying:
\begin{equation*}
\forall x,y\in \Z_2\wr \Z^2,\qquad \|f(x)-f(y)\|_{q}\geq \kappa d_{\Z_2\wr\Z^2}(x,y)^{\frac{1}{q}}.
\end{equation*}
By mechanically tracking the implicit constants in the proof of~\cite[Theorem 3.1]{NaorPeres2011}, one sees that $\kappa(q)\gtrsim q-1$. Thanks to Proposition~\ref{prop:quad-crossIntro}, we can now show that this result is optimal:

\begin{proposition}\label{prop:kappaepsIntro} For every $1<q\le 2$ we have $\kappa(q)\asymp q-1$.
\end{proposition}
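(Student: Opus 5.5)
The plan is to establish the upper bound $\kappa(q)\lesssim q-1$; the matching lower bound $\kappa(q)\gtrsim q-1$ is the one obtained by tracking constants in~\cite[Theorem 3.1]{NaorPeres2011}, as noted before the statement. The idea is to run the proof of Theorem~\ref{thm:MainIntro} with $L_q$ in place of $\ell_2$, keeping track of the exponent $q$. Fix $1<q\le 2$ and $\kappa>0$, and let $f:\Z_2\wr\Z^2\to L_q$ be $1$-Lipschitz with $\|f(x)-f(y)\|_q\ge \kappa\, d_{\Z_2\wr\Z^2}(x,y)^{1/q}$.

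\emph{Step 1: pass to $\ell_2$.} For $1\le q\le 2$ the kernel $\|u-v\|_q^q$ on $L_q$ is of negative type. This is the classical fact that $L_q$ equipped with the metric $\|\cdot-\cdot\|_q^{q/2}$ embeds isometrically into a Hilbert space~\cite{Sch37}. Hence there is $T:L_q\to\ell_2$ with $\|T(u)-T(v)\|_2^2=\|u-v\|_q^q$.

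\emph{Step 2: restrict to a torus.} By Lemma~\ref{lem:embed in each} there is an embedding $\psi$ of $(\Z_2\wr\Z_n^2)_0$ into $\mathbb{X}_n\subset\Z_2\wr\Z^2$ with distortion $O(1)$. After rescaling, we may assume $d(\psi(a),\psi(b))\asymp d_{\Z_2\wr\Z_n^2}(a,b)$. Define $\f:\Z_2^{\Z_n^2}\to\ell_2$ by $\f(x)=T\big(f(\psi(\supp(x),0))\big)$. Then
\begin{equation*}
\kappa^q\, d_{\Z_2\wr\Z_n^2}(a,b)\lesssim \|\f(x)-\f(y)\|_2^2\lesssim d_{\Z_2\wr\Z_n^2}(a,b)^q,
\end{equation*}
where $a=(\supp(x),0)$ and $b=(\supp(y),0)$. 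The upper bound uses only that $f$ is $1$-Lipschitz, so $\|f(u)-f(v)\|_q^q\le d(u,v)^q$, together with the bounded distortion of $\psi$, whose constants raised to the power $q\le 2$ remain $O(1)$.

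\emph{Step 3: apply Proposition~\ref{prop:quad-crossIntro} and the estimates~\eqref{eq:the TSP computations needed}.} For the left hand side of~\eqref{eq:bernoulli}, use the lower bound on $\|\f(x)-\f(y)\|_2^2$ from Step 2 together with $\E[d]\gtrsim n^2\sqrt{\fp}$ for $\fp\ge n^{-2}$. This makes the left hand side $\gtrsim \kappa^q n^2\log n$. For the right hand side, use the upper bound from Step 2 and the deterministic estimate $d\lesssim n$ for cross configurations. This makes the right hand side $\lesssim n\cdot n^q$. Together these give
\begin{equation*}
\kappa^q\lesssim \frac{n^{q-1}}{\log n}\qquad\text{for every integer } n\ge 2.
\end{equation*}

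\emph{Step 4: optimize in $n$.} For $q$ bounded away from $1$ the claim is trivial, since $\kappa\le 1$ always: the $1$-Lipschitz condition at distance $1$ forces it. Otherwise take $n\asymp e^{1/(q-1)}$, so that $n^{q-1}\asymp 1$ and $\log n\asymp 1/(q-1)$. This yields $\kappa^q\lesssim q-1$, i.e.
\begin{equation*}
\kappa\lesssim (q-1)^{1/q}=(q-1)\cdot (q-1)^{-\frac{q-1}{q}}.
\end{equation*}
The last factor equals $\exp\big(\frac{q-1}{q}\log\frac{1}{q-1}\big)$, which is bounded by a universal constant because $t\log(1/t)$ is bounded on $(0,1]$. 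Hence $\kappa\lesssim q-1$.

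The only step that needs genuine care is the bookkeeping in Step 3. The right hand side must be bounded by $n^q$ rather than $n$. This weaker power is exactly what the upper Lipschitz bound gives in $L_q$, and it leaves room only for $\log n\lesssim 1/(q-1)$, which is where the factor $q-1$ comes from. One must also check that the $O(1)$ distortion constants from Lemma~\ref{lem:embed in each} enter only through powers at most $2$, so that they stay uniform in $q$.
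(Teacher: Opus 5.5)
Your proposal is correct and follows the paper's argument essentially verbatim: you compose with Schoenberg's map $L_q\to\ell_2$ (so squared Euclidean distances become $\|u-v\|_q^q$), pull back to $(\Z_2\wr\Z_n^2)_0$ via Lemma~\ref{lem:embed in each}, combine~\eqref{eq:bernoulli} with~\eqref{eq:the TSP computations needed} to get $\kappa^q n^2\log n\lesssim n^{1+q}$, and optimize over $n$, while the lower bound $\kappa(q)\gtrsim q-1$ is taken from~\cite{NaorPeres2011} exactly as in the paper. Your choice $n\asymp e^{1/(q-1)}$ in place of the paper's $e^{q/(q-1)}$ is equivalent for $1<q\le 2$, and your explicit verification that $(q-1)^{1/q}\lesssim q-1$ fills in a step that the paper leaves implicit.
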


To see how to deduce Proposition~\ref{prop:kappaepsIntro} from the new results that we described in Section~\ref{sec:proof sketch}, fix $1<q\le 2$. Suppose that $f:(\Z_2\wr\Z^2)_0\to L_{q}$ and $\kappa>0$ satisfy $\kappa d_{\Z_2\wr \Z^2}(x,y)^{1/q}\le \|f(x)-f(y)\|_q\le d_{\Z_2\wr \Z^2}(x,y)$ for every $x,y\in (\Z_2\wr \Z^2)_0$. The goal is to deduce from this that $\kappa\lesssim q-1$. By~\cite{Schoenberg1938} there is $g:L_q\to \ell_2$ such that $\|g(u)-g(v)\|_2=\|u-v\|_q^{q/2}$ for every $u,v\in L_q$. Hence, $h=g\circ f: (\Z_2\wr\Z^2)_0\to \ell_2$ satisfies $\kappa^{q/2} d_{\Z_2\wr \Z^2}(x,y)^{1/2}\le \|h(x)-h(y)\|_2\le d_{\Z_2\wr \Z^2}(x,y)^{q/2}$ for every $x,y\in (\Z_2\wr \Z^2)_0$. By composing this $h$ with the $O(1)$ distortion embedding of  Lemma~\ref{lem:embed in each}, we get that for every $n\in \N$ there is $\f_n:(\Z_2\wr\Z_n^2)_0\to \ell_2$ such that $\kappa^{q/2}d_{\Z_2\wr \Z_n^2}(x,y)^{1/2}\lesssim  \|\f_n(x)-\f_n(y)\|_2\lesssim d_{\Z_2\wr \Z_n^2}(x,y)^{q/2}$ for every $x,y\in (\Z_2\wr\Z_n^2)_0$. By combining these guarantees for $\f_n$ with~\eqref{eq:bernoulli} and~\eqref{eq:the TSP computations needed} we conclude that $\kappa^qn^2\log n\lesssim n^{1+q}$. Thus:
\begin{equation}\label{eq:kappa bound for all q}
\forall n\in \{2,3,\ldots,\},\qquad \kappa\lesssim \frac{n^{1-\frac{1}{q}}}{(\log n)^{\frac{1}{q}}}.
\end{equation}
Choosing $n\asymp e^{q/(q-1)}$ in~\eqref{eq:kappa bound for all q} minimizes (up to universal factors) its right hand side, and yields $\kappa\lesssim q-1$.

For $q\ge 1$, let $\cc_q(\MM,d_\MM)$ denote the $L_q$ distortion of a separable metric space $(\MM,d_\MM)$. Then we have: 
\begin{equation}\label{eq:lp version}
\forall 1\le q\le 2, \ \forall n\in \{2,3,\ldots\},\qquad   \cc_q\left((\Z_2\wr\Z_n^2)_0,d_{\Z_2\wr\Z_n^2}^{\frac{1}{q}}\right)\asymp (\log n)^{\frac{1}{q}}.
\end{equation}
To justify the upper bound on the distortion in~\eqref{eq:lp version}, even for embeddings of all of $\Z_2\wr \Z_n^2$ rather than only its zero section, by~\cite[Corollary~8]{BaudierMotakisSchlumprechtZsak2022} (using only the fact that $\log |\Z_n^2|\asymp \log n$) there is $f: \Z_2\wr\Z_n^2 \to L_1$ that satisfies $d_{\Z_2\wr \Z_n^2}(x,y)\lesssim \|f(x)-f(y)\|_1\lesssim (\log n)d_{\Z_2\wr \Z_n^2}(x,y)$ for all $x,y\in \Z_2\wr \Z_n^2$. By~\cite{BDCK1966} (see also the treatment in~\cite{WW75} or~\cite[Section~3]{NaorProceedings}), there is $\psi:L_1\to L_q$ satisfying $\|\psi(u)-\psi(v)\|_q=\|u-v\|_1^{1/q}$ for every $u,v\in L_1$. The composition $\psi\circ f$ exhibits the upper bound on the distortion in~\eqref{eq:lp version}.  

The lower bound on the distortion in~\eqref{eq:lp version} is deduced as follows from Theorem~\ref{thm:MainIntro}, similarly to how we justified Proposition~\ref{prop:kappaepsIntro}.  If $\f:(\Z_2\wr\Z_n^2)_0\to L_q$ satisfies $d_{\Z_2\wr \Z_n^2}(x,y)^{1/q}\lesssim \|\f(x)-\f(y)\|_q\lesssim Dd_{\Z_2\wr \Z_n^2}(x,y)^{1/q}$ for some $D\ge 1$ and all $x,y\in (\Z_2\wr \Z_n^2)_0$, then by composing $f$ with the aforementioned $g:L_q\to \ell_2$ that satisfies $\|g(u)-g(v)\|_2=\|u-v\|_q^{q/2}$ for every $u,v\in L_q$, we see that $((\Z_2\wr \Z_n^2)_0,d_{\Z^2\wr \Z_n^2}^{1/2}\big)$ embeds with distortion $D^{q/2}$ into $\ell_2$. By the proof of Theorem \ref{thm:MainIntro}, we get $D^{q/2}\gtrsim \sqrt{\log n}$, i.e., $D\gtrsim (\log n)^{1/q}$, as required.

We do not know~\eqref{eq:lp version} for $2<q<\infty$. In fact, we do not know whether  for any  $2<q<\infty$ the distortion in~\eqref{eq:lp version} is $O(1)$. Furthermore, Question~\ref{Q:snowflake of hilbert} below about embeddings into Hilbert space remains open:

\begin{question}\label{Q:snowflake of hilbert} Is there any  $0<\theta <\frac12$ such that $\big((\Z_2\wr \Z^2)_0,d^\theta_{\Z_2\wr \Z^2}\big)$ admits a bi-Lipschitz embedding into $\ell_2$? 
\end{question}

The restriction $0<\theta <1/2$ in Question~\ref{Q:snowflake of hilbert} was made because by combining~\eqref{eq:lp version} with Lemma~\ref{lem:embed in each} and the fact that   $\ell_2$ embeds isometrically into $L_q$ (see e.g.~\cite[Proposition 6.4.2]{AlbiacKalton}), the same question has a negative answer when $1/2\le \theta \le 1$. 

The following question is about the well-studied  algorithmic notion of {\em sketchability}~\cite{AMS96,SS02}  (see also the discussion in~\cite{AKR15,KN21}) for the planar lamplighter group, which is equivalent to sketchability of traveling salesman tours, per the standard connection that is recalled in Section~\ref{sec:Preliminaries}.  It arises naturally here because a positive answer to Question~\ref{Q:snowflake of hilbert} would imply that it too has a positive answer, as explained in~\cite{AKR15,KN21}, relying on the important works~\cite{KOR99,IM99}. 

\begin{question}\label{Q:sketching} Is $(\Z_2\wr \Z^2)_0$ sketchable?
\end{question}

In \cite{NaorPeres2011} it is proved that:
\begin{equation}\label{eq:zer0 section}
\forall 1\le q<\infty,\qquad \alpha_q^*\big((\Z\wr\Z)_0\big)=\max\left\{\frac{q+1}{2q},\frac{3}{4}\right\}.
\end{equation}
Section~\ref{sec:new zer section compression} below builds (nontrivially) on the method of~\cite{NaorPeres2011} to show that the $L_q$ compression exponent of the zero-section of $\Z_2\wr\Z^2$ also equals the right hand side of~\eqref{eq:zer0 section}. In fact, we obtain the following result:
\begin{theorem}\label{thm:LpcompressionIntro}
    For every $m\in \N$ and every $1\le q<\infty$ we have:
\begin{equation}\label{eq:proposed-exponentintro}
\alpha_q^*\big((\Z_2\wr\Z^m)_0\big)=\max\left\{\frac{q+m-1}{mq},\frac{m+1}{2m}\right\}.
\end{equation}
\end{theorem}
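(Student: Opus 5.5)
The proof has two halves: a lower bound via an explicit embedding and an upper bound via a Poincaré-type inequality. The embedding will follow the construction of~\cite{NaorPeres2011} for $(\Z\wr\Z)_0$, adapted to the lattice $\Z^m$.

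\emph{Lower bound.} We construct embeddings whose compression realizes each term of the maximum separately. A direct sum of two such embeddings then realizes the maximum.

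First, recall that for $(A,0)$ in the zero section, $d((A,0),(\emptyset,0))$ is comparable to $|A|+\TSP(A\cup\{0\})$. For finite $A\subset\Z^m$ one has $\TSP(A)\lesssim |A|^{1-1/m}\diam(A)$, and $\TSP$ dominates a multiscale count of occupied dyadic cubes. Concretely,
\[
\TSP(A)\asymp\sum_k 2^k\cdot\#\{\text{dyadic cubes of side }2^k\text{ meeting }A\}
\]
up to boundary effects.

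We then encode $A$ by the family of functions, indexed by scale $k$, recording the parity or count of points of $A$ in each dyadic cube of side $2^k$. We take these cubes with random shifts, to make the count Lipschitz. Weighting the scale-$k$ piece by $2^{k\beta}$ for a suitable $\beta$ gives a Lipschitz map into $\ell_q$.

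Summing the contributions and optimizing $\beta$ should give compression $\frac{q+m-1}{mq}$. The mechanism is a Hölder interpolation between $|A|$ and $\TSP(A)$ for sets with $|A|\approx r^m$ and diameter $r$: the distance is about $r^m$, while the embedding sees about $r^{(q+m-1)/q}$.

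The exponent $\frac{m+1}{2m}$ should come instead from a Hilbertian embedding, which also gives an $L_q$ embedding since $\ell_2$ embeds into $L_q$. Here we use Markov-type random-walk averaging of the lamp configuration at scale $r$, so that a $\sqrt{\cdot}$ gain in the $\ell_2$ aggregation over the $r^m$ cubes yields $r^{(m+1)/2}$ against distance $r^m$.

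\emph{Upper bound.} For the upper bound we use Lemma~\ref{lem:embed in each} to pass to the finite zero sections $(\Z_2\wr\Z_n^m)_0$.

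For the term $\frac{m+1}{2m}$, we apply an $m$-dimensional analogue of Proposition~\ref{prop:quad-crossIntro}. Comparing noise on the whole cube with noise on a random union of $m$ axis-parallel hyperplanes, or more simply on a random line, is again a character computation on the Boolean cube. Combined with $\TSP$ of $\fp$-random sets being of order $n^m\fp^{1/m}$ and of a random line being of order $n$, this forces $\alpha\le\frac{m+1}{2m}$ in the $q=2$ case. We then transfer to $q<2$ via Schoenberg, as in the derivation of~\eqref{eq:kappa bound for all q}.

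For the term $\frac{q+m-1}{mq}$, which dominates when $q$ is small, we use the standard $L_q$ argument on the hypercube: Enflo type / Pisier inequality comparing the diagonal $\{0,1\}^{n^m}$ distance $\asymp n^m$ against the sum over edges. This yields $\alpha\le\frac{1}{q}+\frac{m-1}{mq}$ after scaling.

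\emph{Main obstacle.} The main difficulty will be the matching constructions, and specifically obtaining Lipschitz control of the multiscale cube-count map relative to $\TSP$ in dimension $m$. That control requires the random-shift averaging and a careful lower bound $\TSP(A)\gtrsim$ the multiscale count, uniformly over scales.
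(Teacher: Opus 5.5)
Your plan has genuine gaps in both halves.

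\emph{Upper bound.} The Enflo-type step uses the wrong cube. Write $[n]=\{1,\ldots,n\}$. If the coordinates of $\{0,1\}^{n^m}$ are single lamps of $[n]^m$, then each edge has length $\asymp n$, because the lamplighter must walk to the lamp and back, while the diagonal has length $\asymp n^m$. So type $p=\min\{q,2\}$ only gives $\kappa^p n^{m\alpha p}\lesssim_q n^m\cdot n^p$, i.e.\ $\alpha\le\frac1p+\frac1m$. This is strictly weaker than the target $\frac{p+m-1}{mp}=\frac1m+\frac{m-1}{mp}$. (The exponent $\frac1q+\frac{m-1}{mq}$ you state also differs from $\frac{q+m-1}{mq}$.)

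The missing idea is Lemma~\ref{lem:hamming-copy}: make each coordinate a whole line, $x\in\{0,1\}^{[n]^{m-1}}\mapsto([n]\times\supp(x),0)$. Then $M\asymp n^{m-1}$ and all distances are $\asymp n\|x-y\|_1$, so \eqref{eq:enflo} gives $\kappa(nM)^\alpha\lesssim\sqrt{q}\,nM^{1/p}$, i.e.\ $\alpha\le\frac{p+m-1}{mp}$. This produces both terms of the maximum at once.

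In particular, nothing in your plan yields $\alpha\le\frac{m+1}{2m}$ for $q>2$. The cross/line Poincar\'e inequality is Hilbertian, Schoenberg's transfer to $\ell_2$ exists only for $q\le 2$, and the single-lamp cube with Enflo type $2$ gives only $\frac12+\frac1m$.

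Your random-line idea can be salvaged for $1\le q\le 2$. By Loomis--Whitney, a uniformly random axis-parallel line $L$ in $[n]^m$ satisfies $\Pr[A\cap L\neq\emptyset]\ge |A|^{\frac{m-1}{m}}/(mn^{m-1})$, so the weight must be $\fp^{-\frac{2m-1}{m}}$. After Schoenberg this gives $\alpha\le\frac{q+m-1}{mq}$, not merely $\frac{m+1}{2m}$. Three smaller corrections:
\begin{enumerate}
\item $\E[\TSP]\asymp n^m\fp^{1-\frac1m}$, not $n^m\fp^{\frac1m}$.
\item Lemma~\ref{lem:embed in each} is proved only for $m=2$; its proof adds the boundary of the box, which has $\asymp n^{m-1}$ edges when $m\ge 3$.
\item You do not need the torus: work in $[n]^m\subset\Z^m$ directly.
\end{enumerate}

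\emph{Lower bound.} This is the real content, and the sketch fails as written.

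First, the claim $\TSP(A)\asymp\sum_k2^kN_k(A)$ is false: a segment of $L$ points gives $\asymp L\log L$ on the right. Only the upper bound on $\TSP$ is available, so Lipschitzness needs a separate per-scale estimate.

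More decisively, consider a map built from randomly shifted dyadic cubes that records parities or counts, with weights depending only on side length. It is essentially translation invariant. For adjacent $y,y'$, the quantity $\|f(\{y\},0)-f(\{y'\},0)\|$ does not grow with $\|y\|_1$, whereas $d_{\Z_2\wr\Z^m}\big((\{y\},0),(\{y'\},0)\big)\asymp\|y\|_1$. So compression fails for every $\alpha>0$. Conversely, if the scales are uncapped, $f$ is not even Lipschitz, since $\emptyset$ and $\{0\}$ differ at every scale.

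The paper's fix is to localize simultaneously in the dyadic annulus around the origin and in a small ball:
\begin{enumerate}
\item It uses the snapshots $A_{k,s,x}$ of \eqref{def:our snapshot}, with $s\le k$ and $x$ in a $2^{s-2}$-net.
\item Each snapshot is encoded as its own basis vector, so every difference in $U\triangle V$ is detected, not just its parity.
\item The weights of \eqref{eq:out higher dim NP} depend on both $k$ and $s$.
\end{enumerate}
Lipschitzness comes from the second bound in \eqref{eq:relate the counts to TSP}. It uses two facts: at most two $k\ge s$ are active for each $(s,x)$, and an optimal tour can be chopped into pieces of length $2^s$.

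Compression comes from the first bound in \eqref{eq:relate the counts to TSP}, which loses only polylogarithmic factors because just $O(\log d)$ annuli are nonempty. It is combined with the volume bound \eqref{eq:a priori count growth}, which converts the largest summand into $(2^sC_{k,s})^{\frac{q+m-1}{qm}}$ up to logarithms.

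Finally, no separate ``Markov-type'' map is needed for the term $\frac{m+1}{2m}$. It is the $q=2$ case of this single map, composed with $\ell_2\hookrightarrow L_q$.
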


Theorem~\ref{thm:LpcompressionIntro} is not formally related to the new results that we discussed thus far, except that the realization (through how Theorem~\ref{thm:MainIntro}  is proved) that the zero section of $\Z_2\wr \Z^2$ turns out to be relevant to Problem~\ref{quest:NaorPeres} leads one to naturally wonder about the question that Theorem~\ref{thm:LpcompressionIntro}  answers.

\bigskip

\noindent\textbf{AI disclosure.}  An LLM had the following major role in  this article.  A.N. delivered a minicourse titled ``Distortion Growth'' at the 14th School on Analysis and Geometry in Metric Spaces (Trento, June 2026), where he recalled Problem~\ref{quest:NaorPeres} because he suspected that the method of the breakthrough~\cite{gartlandOstrovskii2026} should be relevant to it.  G.A., E.C., N.C., L.N., P.W. asked (not involving A.N.) ChatGPT-5.5 Pro for assistance. It suggested an argument  which indeed cleverly considers steps and objects that are in the spirit of~\cite{gartlandOstrovskii2026}. After G.A., E.C., N.C., L.N., P.W. wrote the resulting proof, they sent it to A.N., who proceeded by building on that approach  to obtain the stronger (yet simpler to prove) result herein on the distance of $(\Z_2\wr\Z_n^2)_0$  from negative type, as well as to suggest the rest of the contents of this article (except for the case $m\ge 3$ of Theorem~\ref{thm:LpcompressionIntro}). 
Even though the present article does not include any of the output of the LLM (which was only about the $L_1$ case and relied on concepts that are not needed herein), the present work is very much reliant on and catalyzed by that output, which itself builds on~\cite{gartlandOstrovskii2026}.

\section{Notation and terminology}\label{sec:Preliminaries}
Here we will recall basic notation and terminology that is needed in this article. Given a metric space $(\MM,d_\MM)$ and a normed space $(\X,\|\cdot\|_\X)$, the $(\X,\|\cdot\|_\X)$-distortion of $(\MM,d_\MM)$, denoted $\cc_{(\X,\|\cdot\|_\X)}(\MM,d_\MM)$ or simply $\cc_\X(\MM)$ when the metrics are clear from the context, is the infimum over those $D\ge 1$ for which there exists $f:\MM\to \X$ satisfying $d_\MM(x,y)\le \|f(x)-f(y)\|_\X\le Dd_\MM(x,y)$   for every $x,y\in \MM$. If no such embedding exists, then one sets $\cc_\X(\MM)=\infty$. When $\MM$ is separable and $\X=L_p=L_p(\R)$ for some $1\le p\le \infty$, we write $\cc_\X(\MM)=\cc_p(\MM)$. The invariant $\cc_2(\MM)$ is called the Euclidean distortion of $\MM$.  

For every $n\in \N$ write $\cc_\X^n(\MM)=\sup\{\cc_\X(A):\ A\subset \MM\ \ \mathrm{and}\ \ |A|\le n\}$. The sequence  $\{\cc_\X^n(\MM)\}_{n=1}^\infty$ is called the $\X$-distortion growth sequence of $\MM$.

When $(\MM,d_\MM)$ is a countable discrete metric space and $1\le p\le \infty$, its $L_p$ compression exponent, denoted $\alpha_p^*(\MM,d_\MM)$ or simply    $\alpha_p^*(\MM)$ when the metric is clear from the context,  is the supremum over those $\alpha\ge 0$ for which there is a Lipschitz function $f:\MM\to L_p$ and a constant $\kappa>0$ such that $\|f(x)-f(y)\|_p\ge \kappa d_\MM(x,y)^\alpha$ for every $x,y\in \MM$. We say that this compression exponent is attained if the above supremum is a maximum. 

Let $(\MM,d_\MM)$ be a metric space. Given a finite subset $A$ of $\MM$ and $x,y\in \MM$,  we let $\TSP^{d_\MM}(A;x,y)$ denote the infimal $d_\MM$-length of  a traveling salesman (TSP) tour that starts at $x$, visits all of $A$, and ends at $y$, i.e., if we write $k=|A\setminus\{x,y\}|$ and let $\{a_1,\ldots,a_k\}=A\setminus\{x,y\}$ be an arbitrary enumeration of $A\setminus\{x,y\}$, then:
\begin{equation}\label{eq:permutational definition}
\TSP^{d_\MM}(A;x,y)\eqdef \min_{\pi\in S_k} \bigg(d_\MM(x,a_{\pi(1)})+\sum_{i=1}^{k-1} d_\MM(a_{\pi(i)},a_{\pi(i+1)})+d_\MM(a_{\pi(k)},y)\bigg),
\end{equation}
where $S_k$ denotes the permutations of $\{1,\ldots,k\}$, and  $\TSP^{d_\MM}(A;x,y)=d_\MM(x,y)$ when $A\subset \{x,y\}$. By dropping the first summand in~\eqref{eq:permutational definition}, we record for later use the following basic lower bound:
\begin{equation}\label{eq:trivial lower}
\TSP^{d_\MM}(A;x,y)\ge \sum_{a\in A\setminus \{x,y\}} d_\MM\big(a, (A\setminus\{a\})\cup\{y\}\big)\ge \sup_{r>0} r \big|\big\{a\in A\setminus  \{x,y\}:\ d_\MM\big(a, (A\setminus\{a\})\cup\{y\}\big)\ge r\big\}\big|.
\end{equation}
Another immediate consequence of~\eqref{eq:permutational definition} is:
\begin{equation}\label{eq:TSP to support size}
\TSP^{d_\MM}(A;x,y)\ge (|A|-1)\inf_{\substack{u,v\in \MM\\ u\neq v}} d_\MM(u,v). 
\end{equation}

The lamplighter group $\Z_2\wr G$ over a group $G$ consists of all the pairs $(A,x)$, where $A$ is a finite subset of $G$ and $x\in G$, and equipped with the group law $(A,x)(B,y)=(A\triangle xB,xy)$. It is useful to interpret $(A,x)$ as describing a configuration where the lamplighter is at $x$, and the lamps are on at elements of $A$. The zero section $(\Z_2\wr G)_0$ of the lamplighter group over $G$ is the normal subgroup of $\Z_2\wr G$ that consists of all those $(A,x)\in  \Z_2\wr G$ for which $x=e_G$, where $e_G$ is the identity element of $G$. 

Suppose that $G$ is generated by a finite symmetric set $S\subseteq G$, and let $d_S$ denote the left-invariant word metric that is induced by $S$. The standard generating set of $\Z_2\wr G$ consists of $(\emptyset,s)$ for every $s\in S$, as well as $(\{e_G\},e_G)$. The word metric that is induced by this set of generators is denoted $d^S_{Z_2\wr G}$, and it is given by:
\begin{equation}\label{eq:lamp to TSP}
\forall (A,x),(B,y)\in \Z_2\wr G,\qquad d^S_{\Z_2\wr G}\big((A,x),(B,y)\big)=|A\triangle B|+\TSP^{d_S}(A\triangle B;x,y). 
\end{equation}
See e.g.~\cite{Genevois2025Lamplighters} for the derivation of the standard identity~\eqref{eq:lamp to TSP}. Because $d_S$ takes values in $\N\cup\{0\}$, by combining~\eqref{eq:lamp to TSP} and~\eqref{eq:TSP to support size} we see that every $(A,x),(B,y)\in \Z_2\wr G$ with $A\neq B$ satisfy:
\begin{equation}\label{eq:lamp to TSP zero section}
 \TSP^{d_S}(A\triangle B;x,y)+1\le d^S_{\Z_2\wr G}\big((A,x),(B,y)\big)\le  2\TSP^{d_S}(A\triangle B;x,y)+1. 
\end{equation}

When $G=\Z_n^m$ or $G=\Z^m$ for some $m,n\in \N$, we will always assume that the generating set $S$ is $\{\pm e_1,\ldots,\pm e_m\}$, where $e_1,\ldots,e_m$ is the standard (coordinate) basis. The metrics on  these groups we will be denoted $d_{\Z_m^n}, d_{\Z^m}$. Correspondingly, the lamplighter groups over these groups will always be assumed to be generated by the canonical generating set, and we will drop the superscripts $S$ and $d_S$ in the notation~\eqref{eq:lamp to TSP} for their metrics.

\section{Proof of Proposition~\ref{prop:quad-crossIntro}}\label{sec:cross proof}

Fix $n\in \N$. The characters of  $\Z_2^{\Z_n^2}$  are (see e.g.~\cite{ODonnell2014}) the Walsh functions $\{W_{A}\}_{A\subset \Z_n^2}$, where:
\begin{equation*}\label{eq:walsh remind}
\forall A\subset \Z_n^2,\ \forall x=(x_{y})_{y\in  \Z_n^2}\in \Z_2^{\Z_n^2},\qquad W_A(x)\eqdef (-1)^{\sum_{y\in A}x_y}. 
\end{equation*}
Fix $A\subset \Z_n^2$. The  fact that $W_A$ is a product over coordinates, combined with the independence of the coordinates of the  random variable $\nu^\fp$ that is given in~\eqref{eq:random vectors}, yields the following standard identity:
\begin{equation}\label{eq:p bernoulli noise walsh}
\E[W_A(\nu^\fp)]= \E\Big[\prod_{y\in A} (-1)^{b_y^\fp}\Big]= \Big(\E \big[(-1)^{b^\fp}\big]\Big)^{|A|}=(1-2\fp)^{|A|}.
\end{equation}
For the same reason, by the independence of the coordinates of the  random vector $\gamma$  given in~\eqref{eq:random vectors}, and because they are independent from the random variable $(i,j)$ that is distributed uniformly over $\Z_n^2$, if we let $A_1$ and $A_2$ be the projections of $A$ onto, respectively, the first and second coordinates, i.e.,
\begin{equation}\label{eq:def projections}
 A_1=\big\{s\in \Z_n:\ \big(\{s\}\times \Z_n\big)\cap A\neq\emptyset\big\}\qquad \mathrm{and}\qquad A_2=\big\{s\in \Z_n:\ \big( \Z_n\times \{s\}\big)\cap A\neq\emptyset\big\}, 
\end{equation}
then, recalling the definition of the random cross $\sub_{ij}\subset \Z_n^2$ in~\eqref{eq:def random cross}, we also have the following identity:  
\begin{align}\label{eq:rhs identity}
\begin{split}
\E[W_A(\gamma)]= \E\Big[\prod_{y\in A\cap \sub_{ij}} (-1)^{b_y}\Big]= \Pr[A\cap \sub_{ij}= \emptyset]=\Pr[i\notin A_1\ \mathrm{and}\  j\notin A_2]=\Big(1-\frac{|A_1|}{n}\Big)\Big(1-\frac{|A_2|}{n}\Big).
\end{split}
\end{align}

As   $\{W_{A}\}_{A\subset \Z_n^2}$ is an orthonormal basis of $L_2(\Z_2^{\Z_n^2})$, the quadratic nature of the desired inequality~\eqref{eq:bernoulli} means that it suffices to prove that~\eqref{eq:bernoulli} holds when $\f=W_A$ for some $A\subset \Z_n^2$, for which~\eqref{eq:bernoulli} becomes:
\begin{equation}\label{specialize to walsh}
  2\int_0^1\big(1-\E[W_A(\nu^\fp)]\big)\frac{\ud \fp}{\fp^{\frac32}}=\int_0^1\E \big[(W_A(\nu^\fp)-1)^2\big]\frac{\ud \fp}{\fp^{\frac32}}\lesssim n \E \big[(W_A(\gamma)-1)^2\big]=2n\big(1-\E[W_A(\gamma)]\big),
\end{equation}
where the first and last equalities in~\eqref{specialize to walsh} hold since $(W_A-1)^2=2(1-W_A)$ point-wise, as $W_A$ takes values in $\{-1,1\}$. 
To prove that~\eqref{specialize to walsh} indeed holds,  fix  $A\subset \Z_n^2$ and evaluate its left hand side using~\eqref{eq:p bernoulli noise walsh} as follows:  
\begin{equation}\label{eq: left integral}
2\int_0^1\big(1-\E[W_A(\nu^\fp)]\big)\frac{\ud \fp}{\fp^{\frac32}}\stackrel{\eqref{eq:p bernoulli noise walsh}}{=}2\int_0^1 \frac{1-(1-2\fp)^{|A|}}{\fp^{\frac32}}\ud \fp\asymp \int_0^1 \frac{\min\big\{\fp|A|,1\big\}}{\fp^{\frac32}}\ud \fp\asymp \sqrt{|A|}.
\end{equation}
By~\eqref{eq:rhs identity}, the right hand side of~\eqref{specialize to walsh} is $2n(1-(1-|A_1|/n)(1-|A_2|/n))$, and it is elementary to check that this quantity is at least $2\max\{|A_1|,|A_2|\}$. But, $A\subset A_1\times A_2$ by~\eqref{eq:def projections}, whence $|A|\le |A_1|\cdot |A_2|\le (\max\{|A_1|,|A_2|\})^2$. The right hand side of~\eqref{specialize to walsh} is therefore at least $2\sqrt{|A|}$, which is the required bound~\eqref{specialize to walsh} thanks to~\eqref{eq: left integral}.  \qed

\begin{remark} A more careful examination of the above reasoning shows that~\eqref{eq:bernoulli} holds with the implicit constant equal to $2\sqrt{2\pi}$, and this is optimal. That constant is immaterial herein, so we omit the details. 
\end{remark}

\section{Proof of~\eqref{eq:the TSP computations needed}}\label{sec:TSP computation}

We already explained in Section~\ref{sec:proof sketch} how  Theorem~\ref{thm:MainIntro} follows  by combining~\eqref{eq:the TSP computations needed} with Proposition~\ref{prop:quad-crossIntro}. Here we will justify~\eqref{eq:the TSP computations needed}. For its first part, observe that $\supp(\gamma)\subset \sub_{ij}$, whence: 
$$
d_{\Z_2\wr \Z_n^2} \big((\supp(\gamma),0),(\emptyset,0)\big)\stackrel{\eqref{eq:lamp to TSP zero section}}{\lesssim} 1+\TSP(\sub_{ij};0,0)\lesssim n,
$$
where the last step holds due to the following TSP tour of length $O(n)$: start at $0$, go to $(i,j)$ along a shortest path in the Cayley graph of $\Z_n^2$, perform  four return excursions from the center $(i,j)$ of the cross $\sub_{ij}$ to its four corners, and conclude by  returning to $0$ along a shortest Cayley path.  

For the rest of~\eqref{eq:the TSP computations needed}, Lemma~\ref{lem:TSP lower immediate}  below is a simple lower bound on the expected TSP cost of Bernoulli subsets of a  metric space $(\MM,d_\MM)$. In it, we write $B_{d_\MM}$ and $B_{d_\MM}^\circ$  for, respectively, closed and open balls. 

\begin{lemma}\label{lem:TSP lower immediate} Suppose that $(\MM,d_\MM)$ is a finite metric space. For $0\le \fp\le 1$ let $A_\fp$ be the random subset of $\MM$ that is obtained by including independently each element of $\MM$ with probability $\fp$. Then:
$$
\forall x,y\in \MM,\qquad \E\big[\TSP^{d_\MM}(A_\fp;x,y)\big]\ge \fp \sup_{r>0} \bigg( r\sum_{a\in \MM\setminus (B_{d_\MM}^\circ(y,r)\cup\{x\})}(1-\fp)^{|B_{d_\MM}^\circ(a,r)|-1}\bigg).
$$
Consequently, if we denote $\mathsf{V}^{d_\MM}(r)\eqdef \max_{z\in \MM} |B_{d_\MM}(z,r)|$ for every $r\ge 0$, 
then:
\begin{equation}\label{eq:with upper and lower growth}
\forall x\in \MM,\ \forall r>0,\qquad \E\big[\TSP^{d_\MM}(A_\fp;x,x)\big]\ge \fp r\big(|\MM|-\mathsf{V}^{d_\MM}(r)\big)(1-\fp)^{\mathsf{V}^{d_\MM}(r)-1}. 
\end{equation}
\end{lemma}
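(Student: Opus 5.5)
We apply the deterministic lower bound \eqref{eq:trivial lower} to the random set $A_\fp$ and then take expectations. Fix $x,y\in\MM$ and $r>0$. By \eqref{eq:trivial lower},
$$
\TSP^{d_\MM}(A_\fp;x,y)\ge r\,\Big|\Big\{a\in A_\fp\setminus\{x,y\}:\ d_\MM\big(a,(A_\fp\setminus\{a\})\cup\{y\}\big)\ge r\Big\}\Big|.
$$
We restrict attention to those $a\in \MM\setminus(B^\circ_{d_\MM}(y,r)\cup\{x\})$ that satisfy two conditions: $a\in A_\fp$, and $A_\fp\cap B^\circ_{d_\MM}(a,r)=\{a\}$. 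Such an $a$ is not $x$. It is not $y$ either, since $y\in B^\circ(y,r)$. Moreover $d_\MM(a,y)\ge r$ because $a\notin B^\circ(y,r)$, and every other point of $A_\fp$ is at distance at least $r$ from $a$. So each such $a$ is counted in the set on the right-hand side. Hence
$$
\TSP^{d_\MM}(A_\fp;x,y)\ge r\sum_{a\in \MM\setminus (B^\circ(y,r)\cup\{x\})}\1_{\{a\in A_\fp\}}\1_{\{A_\fp\cap (B^\circ(a,r)\setminus\{a\})=\emptyset\}}.
$$
The two events in each summand depend on disjoint sets of coordinates, so they are independent. The first has probability $\fp$, and the second has probability $(1-\fp)^{|B^\circ(a,r)|-1}$. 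Taking expectations and then the supremum over $r>0$ gives the first assertion.

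For \eqref{eq:with upper and lower growth}, take $y=x$ and fix $r>0$. Each open ball satisfies $|B^\circ(a,r)|\le |B(a,r)|\le \mathsf V^{d_\MM}(r)$, so each summand is at least $(1-\fp)^{\mathsf V^{d_\MM}(r)-1}$. The index set $\MM\setminus B^\circ(x,r)$ (note that $x\in B^\circ(x,r)$ already) has cardinality at least $|\MM|-\mathsf V^{d_\MM}(r)$. Substituting these two bounds yields \eqref{eq:with upper and lower growth}.

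No step here is a genuine obstacle. The only points needing care are checking that the selected points avoid $\{x,y\}$, which is what excluding $B^\circ(y,r)\cup\{x\}$ from the index set achieves, and noting that the closed-versus-open ball distinction only helps in the final bound.
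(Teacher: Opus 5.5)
Your proof is correct and is essentially the paper's argument. Fix $r>0$ and apply \eqref{eq:trivial lower} to $A_\fp$. Every $a\in\MM\setminus(B^\circ_{d_\MM}(y,r)\cup\{x\})$ with $a\in A_\fp$ and $A_\fp\cap(B^\circ_{d_\MM}(a,r)\setminus\{a\})=\emptyset$ is counted there. Then take expectations using the independence of these two events. (The paper notes that this is in fact an exact characterization of the counted set, but the one direction you prove is all that is needed.)

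You also derive \eqref{eq:with upper and lower growth} explicitly, by setting $y=x$ and bounding $|B^\circ_{d_\MM}(a,r)|$ and $|B^\circ_{d_\MM}(x,r)|$ by $\mathsf{V}^{d_\MM}(r)$. This fills in the step the paper leaves implicit.
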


\begin{proof} For  $A\subset \MM$, a point $a\in \MM$ belongs to the subset  in the right hand side of~\eqref{eq:trivial lower} if and only if it differs from $x$, does not belong to $B^\circ_{d_\MM}(y,r)$, belongs to $A$, and no point of $A$ belongs to $B_{d_\MM}^\circ(a,r)\setminus\{a\}$. Hence:
\begin{align*}
\E\big[\TSP^{d_\MM}(A_\fp;x,y)\big]&\ge \sup_{r>0} \E \bigg[\sum_{a\in \MM\setminus (B_{d_\MM}^\circ(y,r)\cup\{x\})}r\1_{\{a\in A_\fp\ \mathrm{and}\ A_\fp\cap (B_{d_\MM}^\circ(a,r)\setminus\{a\})=\emptyset\}}\bigg]\\&=\sup_{r>0}  \bigg(\sum_{a\in \MM\setminus (B_{d_\MM}^\circ(y,r)\cup\{x\})}r\Pr \Big[a\in A_\fp\ \mathrm{and}\ A_\fp\cap (B_{d_\MM}^\circ(a,r)\setminus\{a\})=\emptyset\Big]\bigg)\\&= \sup_{r>0}  \bigg(\sum_{a\in \MM\setminus (B_{d_\MM}^\circ(y,r)\cup\{x\})}r\fp(1-\fp)^{|B_{d_\MM}^\circ(a,r)|-1}\bigg). \tag*{\qedhere}
\end{align*}
\end{proof}

Proposition~\ref{prop:upper growth} below will be deduced straightforwardly  from  Lemma~\ref{lem:TSP lower immediate}. It  generalizes   the second part of~\eqref{eq:the TSP computations needed}, because in its special case $G=\Z_n^2$, with $m=2$ and $\phi,C$ universal constants,   it yields the following lower bound when $\frac{1}{n^2}\leq\fp\leq 1$:
$$
\E\big[\TSP^{d_{\Z_n^2}}(\supp(\nu^\fp);0,0)\big]\gtrsim n^2\sqrt{\fp}.
$$
From here the second part of~\eqref{eq:the TSP computations needed} follows because:
$$
\E \Big[d_{\Z_2\wr \Z_n^2} \big((\supp(\nu^\fp),0),(\emptyset,0)\big)\Big]\stackrel{\eqref{eq:lamp to TSP} }{=}\E\big[|\supp(\nu^\fp)|\big]+\E\big[\TSP^{d_{\Z_n^2}}(\supp(\nu^\fp);0,0)\big]\gtrsim n^2\fp+n^2\sqrt{\fp}\asymp n^2\sqrt{\fp}. 
$$

\begin{proposition}\label{prop:upper growth} Fix $0<\phi\le 1\le C$ and $m\in \N$.  Let $G$  be a finite group whose unit element is $e$, and let $S \subset G$ be a symmetric generating set of $G$. Denote the  corresponding  left-invariant word metric by $d_S$. Assume  the following nondegeneracy requirement from diameter $\diam_S(G)$ of $G$ with respect to $d_S$:
\begin{equation}\label{eq:diam lower}
\diam_S(G)\ge \Big(\frac{|G|}{2C}\Big)^{\frac{1}{m}}.
\end{equation}
Denoting $B_S(e,r)=B_{d_S}(e,r)$ for  $r\ge 0$, assume furthermore the following upper growth bound for $d_S$-balls:
\begin{equation}\label{eq:upper growth}
 \forall r\in \big \{1,\ldots,\lfloor \phi\diam_S(G)\rfloor\big\},\qquad  |B_S(e,r)|\le C r^m.
\end{equation}
For $0\le \fp\le 1$, let $A_\fp$ be the random subset of $G$ that is obtained by including independently each element of $G$ with probability $\fp$. Then:
\begin{equation}\label{eq:lower TSP group}
\forall \frac{1}{|G|}\le \fp\le 1,\qquad \E\big[\TSP^S(A_\fp;e,e)\big]\gtrsim \frac{\phi}{\sqrt[m]{C}}|G|\fp^{1-\frac{1}{m}}. 
\end{equation}
\end{proposition}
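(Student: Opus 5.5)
We will apply the second part of Lemma~\ref{lem:TSP lower immediate}, namely~\eqref{eq:with upper and lower growth}, with $\MM=G$, $d_\MM=d_S$ and $x=e$. By left-invariance of $d_S$, every ball of radius $r$ has cardinality $|B_S(e,r)|$, so $\mathsf{V}^{d_S}(r)=|B_S(e,r)|$. The plan is to choose the radius $r$ so that $\mathsf{V}^{d_S}(r)$ is of order $1/\fp$. This makes the factor $(1-\fp)^{\mathsf{V}(r)-1}$ bounded below by a universal constant. It also makes $|G|-\mathsf{V}(r)\ge |G|/2$, and the prefactor $\fp r$ is then of order $\fp\cdot \fp^{-1/m}C^{-1/m}$.

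\textbf{Choice of $r$.} Let $D=\diam_S(G)$ and set
$$
r=\Big\lfloor \phi\min\Big\{\frac{1}{(2C\fp)^{1/m}},D\Big\}\Big\rfloor.
$$
First assume $r\ge 1$. Then $r\in\{1,\ldots,\lfloor\phi D\rfloor\}$, so~\eqref{eq:upper growth} applies and gives
$$
\mathsf{V}(r)\le Cr^m\le C\phi^m\min\Big\{\frac{1}{2C\fp},D^m\Big\}.
$$
This yields two bounds.
\begin{itemize}
\item Since $\mathsf{V}(r)\le \frac{1}{2\fp}$, we get $(1-\fp)^{\mathsf{V}(r)-1}\ge (1-\fp)^{1/(2\fp)}$. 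When $\fp\le 1/2$ this is $\ge 1/4$ (say). When $\fp>1/2$ we have $\mathsf{V}(r)\le 1$, so the exponent is $0$ and the factor equals $1$.
\item Since $\mathsf{V}(r)\le \frac{1}{2\fp}\le \frac{|G|}{2}$ (using $\fp\ge 1/|G|$), we get $|G|-\mathsf{V}(r)\ge |G|/2$.
\end{itemize}
Substituting into~\eqref{eq:with upper and lower growth} gives $\E[\TSP]\gtrsim \fp r|G|$.

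\textbf{Lower bound on $r$.} It remains to show $r\gtrsim \phi (C\fp)^{-1/m}$, and this is where~\eqref{eq:diam lower} enters. It gives $D\ge (|G|/2C)^{1/m}\ge (2C)^{-1/m}\fp^{-1/m}$, because $\fp\ge 1/|G|$. Therefore
$$
\min\Big\{(2C\fp)^{-1/m},D\Big\}\ge (2C\fp)^{-1/m}\cdot 2^{-2/m}\gtrsim (C\fp)^{-1/m}.
$$
So the quantity inside the floor is $\gtrsim \phi(C\fp)^{-1/m}$. Whenever this quantity is at least $2$, the floor loses at most a factor $2$, and we conclude
$$
\E[\TSP]\gtrsim \fp\cdot\phi (C\fp)^{-1/m}|G|=\frac{\phi}{\sqrt[m]{C}}|G|\fp^{1-\frac1m}.
$$

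\textbf{Degenerate case.} The main obstacle is the case where $\phi(C\fp)^{-1/m}\lesssim 1$, i.e.\ the chosen radius is below $1$. There we instead use the trivial bound~\eqref{eq:TSP to support size}: distinct points are at distance at least $1$, so
$$
\E[\TSP]\ge \E|A_\fp|-1=\fp|G|-1.
$$
This is $\gtrsim \fp|G|$ provided $\fp|G|\ge 2$. In this regime $\fp\ge \fp^{1-1/m}\cdot \phi C^{-1/m}/O(1)$, so $\fp|G|$ dominates the target bound. The leftover range $1\le \fp|G|<2$ is handled by noting that $\Pr[A_\fp\ni a\neq e]\gtrsim 1$ for some $a$ far from $e$. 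Concretely, with probability $\gtrsim 1$ the set $A_\fp$ contains a point at distance $\gtrsim D$ from $e$, since at least half of the group lies outside $B_S(e,\phi D/2)$. The resulting bound $\E[\TSP]\gtrsim \phi D$ exceeds the target by~\eqref{eq:diam lower}. The care needed is in checking that the constants in these boundary regimes are compatible with the factor $\phi C^{-1/m}$. This is routine given $\phi\le1\le C$.
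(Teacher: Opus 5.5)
Your main case is the paper's proof: the radius $r=\lfloor \phi(2C\fp)^{-1/m}\rfloor$ is fed into \eqref{eq:with upper and lower growth}, and when this $r$ vanishes you fall back to a cardinality bound. Your $\min$ with $D$ is redundant, since \eqref{eq:diam lower} and $\fp\ge 1/|G|$ already give $(2C\fp)^{-1/m}\le D$.

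The one soft spot is your leftover range $1\le \fp|G|<2$. There you assert that at least half of $G$ lies outside $B_S(e,\phi D/2)$. This does not follow from the hypotheses you point to. \eqref{eq:upper growth} is only an upper bound on balls, and \eqref{eq:diam lower} gives $|G|\le 2CD^m$, so $C(\phi D/2)^m$ need not be at most $|G|/2$. The claim is in fact true in the form ``at least half of $G$ is at distance $\ge D/2$ from $e$'', by a transitivity argument: if $d_S(e,w)=D$, then $d_S(e,u)<D/2$ forces $d_S(w,u)>D/2$, and $u\mapsto d_S(w,u)$ has the same distribution as $u\mapsto d_S(e,u)$. But you did not supply this.

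The step is also unnecessary. In your degenerate case $\fp^{-1/m}\lesssim C^{1/m}/\phi$. So when $\fp|G|<2$ the target $\frac{\phi}{\sqrt[m]{C}}(\fp|G|)\fp^{-1/m}$ is $O(1)$. Meanwhile $\E[\TSP^S(A_\fp;e,e)]\ge 2\Pr[A_\fp\setminus\{e\}\neq\emptyset]\ge 2(1-e^{-1/2})$, using $\fp|G|\ge 1$ and $|G|\ge 2$.

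The paper avoids the split altogether with a sharper pointwise bound than \eqref{eq:TSP to support size}: $\TSP^S(A;e,e)\ge |A\setminus\{e\}|+1$ whenever $A\setminus\{e\}\neq\emptyset$. This yields $\E[\TSP^S(A_\fp;e,e)]\ge \fp|G|$ for every $\fp$, so the degenerate case closes in one line.
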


\begin{proof} The point-wise identity  $\TSP^S(A_\fp;e,e)\ge (|A_\fp\setminus\{e\}|+1)\1_{\{A_\fp\setminus\{e\}\neq\emptyset\}}=|A_\fp\setminus\{e\}|+1-\1_{\{A_\fp\subset \{e\}\}}$ holds because the minimum nonzero $d_S$-distance is $1$. Hence, for every $0\le \fp\le 1$ we have:
\begin{equation}\label{eq:expected size bernoulli}
 \E\big[\TSP^S(A_\fp;e,e)\big]\ge \E\big[|A_\fp\setminus\{e\}|\big]+1-\Pr\Big[\bigcap_{x\in G\setminus \{e\}} \{x\notin A_\fp\}\Big]=\fp(|G|-1)+1-(1-\fp)^{|G|-1}\ge \fp|G|, 
\end{equation}
where the last step of~\eqref{eq:expected size bernoulli} holds because~\eqref{eq:diam lower} implies that  $|G|\ge 2$. Fix $1/|G|\le\fp\le 1$.  Note that~\eqref{eq:expected size bernoulli} gives~\eqref{eq:lower TSP group} if $\phi/\sqrt[m]{2C\fp}<1$, so we may assume that  $\phi/\sqrt[m]{2C\fp}\ge 1$ and set:
\begin{equation}\label{eq:choose the r}
r\eqdef \left\lfloor \frac{\phi}{\sqrt[m]{2C\fp}}\right\rfloor\in\N.
\end{equation}
 This choice of $r$  satisfies the following a priori bounds: 
\begin{equation}\label{eq:r a priori}
\frac{\phi}{2\sqrt[m]{2C\fp}}\le r\le \frac{\phi}{\sqrt[m]{2C\fp}}\le \phi\Big(\frac{|G|}{2C}\Big)^{\frac{1}{m}}\stackrel{\eqref{eq:diam lower}}{\le} \phi\diam_S(G),
\end{equation}
where the first step of~\eqref{eq:r a priori} uses the assumption $\phi/\sqrt[m]{2C\fp}\ge 1$, and the third step of~\eqref{eq:r a priori} uses $\fp\ge 1/|G|$. 

The upper bound on $r$ in~\eqref{eq:r a priori} shows that we may apply~\eqref{eq:upper growth} with the left-invariance of $d_S$ to get:
\begin{equation}\label{eq:V upper m}
\mathsf{V}^{d_S}(r)\le Cr^m, 
\end{equation} 
where we recall the notation for $\mathsf{V}^{d_\MM}(r)$ in Lemma~\ref{lem:TSP lower immediate}. Now, apply   Lemma~\ref{lem:TSP lower immediate} to conclude as follows: 
\begin{multline*}\label{eq:for upper growth}
\E\big[\TSP^S(A_\fp;e,e)\big]\stackrel{\eqref{eq:with upper and lower growth}\wedge \eqref{eq:V upper m}}{\ge} \fp r \big(|G|-C r^m\big)(1-\fp)^{C r^m-1}\\\stackrel{\eqref{eq:r a priori}}{\ge} \fp \frac{\phi}{2\sqrt[m]{2C\fp}}\left(|G|-\frac{\phi^m}{2\fp}\right)(1-\fp)^{\frac{\phi^m}{2\fp}} \ge \fp \frac{\phi}{2\sqrt[m]{2C\fp}}\left(|G|-\frac{\phi^m}{2}|G|\right)e^{-\phi^m} \asymp \frac{\phi}{\sqrt[m]{C}}|G|\fp^{1-\frac{1}{m}}, 
\end{multline*}
where the penultimate step uses $\fp\ge 1/|G|$ and $1-\fp\ge e^{-2\fp}$, which holds because  $0<\phi\le 1\le C$ and we are assuming that $\fp\le \phi^m/(2C)\le 1/2$, and the final step holds because $0<\phi\le 1$.
\end{proof}

\section{Proof of Lemma~\ref{lem:embed in each}}\label{sec:bi-embedding sec for reduction} 

For  $k\in \N$ let $q_k:\Z\to \Z_k$ be the  quotient  (reduction modulo $k$), i.e., $q_k(x)=x+k\Z$ for $x\in \Z$.  
Fix $n\in \N$.  The second part of Lemma~\ref{lem:embed in each} follows  by considering $q_{4n}^{\otimes 2}:\Z^2\to \Z_{4n}^2$, i.e., $q_{4n}^{\otimes 2}(i,j)=(q_{4n}(i),q_{4n}(j))$ for  $(i,j)\in \Z^2$, and lifting it to  $Q_{4n}:(\Z_2\wr \Z^2)_0\to (\Z_2\wr \Z_{4n}^2)_0$   by $Q_{4n}(A,0)=(q_{4n}^{\otimes 2}(A),0)$ for  $A\subset \Z^2$.  Note that $q_{4n}^{\otimes 2}$ is an isometry  of $\{-n,\ldots,n\}^2$ into $\Z_{4n}^2$ as  for $x,y\in [-n,n]^2$ all of the coordinates of $x-y$ are in $[-2n,2n]$. Hence, $q^{\otimes 2}_{4n}(A)\triangle q_{4n}^{\otimes 2}(B)= q^{\otimes 2}_{4n}(A\triangle B)$ and $|q^{\otimes 2}_{4n}(A)\triangle q^{\otimes 2}_{4n}(B)|=|A\triangle B|$ for  $A,B\subset [-n,n]^2$ (by the injectivity of $q_{4n}^{\otimes 2}$), and furthermore    $\TSP(q^{\otimes 2}_{4n}(A)\triangle q_{4n}^{\otimes 2}(B);0,0)=\TSP(q^{\otimes 2}_{4n}(A\triangle B);0,0)=\TSP(A\triangle B;0,0)$ follows immediately from the isometric property and the definition~\eqref{eq:permutational definition}. Thanks to~\eqref{eq:lamp to TSP}, these observations show that $Q_{4n}$ is an isometric embedding of $\bbX_n$ into $(\Z_2\wr \Z_{4n}^2)_0$.

The more substantial direction of Lemma~\ref{lem:embed in each} is to show that $(\Z_2\wr\Z_n^2)_0$ embeds with distortion $O(1)$ into $\bbX_n$.  For  $i\in \Z_n$, let  $\lambda_n(i)$ be the unique integer in $[-\lfloor n/2\rfloor,\lceil n/2\rceil-1]$ for which $q_n\circ \lambda_n(i)=i$. Consider the map $\lambda_n^{\otimes 2}:\Z_n^2\to \Z^2$, i.e., $\lambda_n^{\otimes 2}(i,j)=(\lambda_n(i),\lambda_n(j))$ for $(i,j)\in \Z_n^2$, and lift it to $\Lambda_n:(\Z_2\wr\Z_n^2)_0\to (\Z_2\wr\Z^2)_0$ by $\Lambda_n(A,0)=(\lambda_n^{\otimes 2}(A),0)$ for $A\subset \Z_n^2$. Then, $\Lambda_n$ takes values in $\bbX_n$ (in fact, its image is in $\bbX_{\lfloor n/2\rfloor}$). As above, since $\lambda_n^{\otimes 2}$ is (by design) a bijection, it suffices to prove that $\TSP(\Lambda_n(S);0,0)\asymp \TSP(S;0,0)$ for every $S\subset \Z_n^2$.  By considering the image under $q_n^{\otimes 2}:\Z^2\to \Z_n^2$ of a  TSP tour in $\Z^2$ starting from $0$, ending at $0$, and covering $\Lambda_n(S)$, we see that $\TSP(S;0,0)\le  \TSP(\Lambda_n(S);0,0)$. The crux of the matter is thus to prove that $\TSP(\Lambda_n(S);0,0)\lesssim \TSP(S;0,0)$. We will next explain why this is so via a short combinatorial argument. 

Write $\ell=\TSP(S;0,0)$ and let  $x_0,\ldots,x_\ell\in \Z_n^2$ be such that $x_0=x_\ell=0$ and $\{x_0,\ldots,x_\ell\}\supseteq S$, and $\{x_{i-1},x_i\}$ is an edge in the Cayley graph of $\Z_n^2$ for every $i\in \{1,\ldots,\ell\}$. It is convenient to set $x_{-1}=x_{\ell+1}=0$. We call $i\in \{1,\ldots,\ell\}$ a {\em transition} if $\{\lambda_n^{\otimes 2}(x_{i-1}),\lambda_n^{\otimes 2}(x_{i})\}$ is not an edge of the integer grid $\Z^2$. By the definition of $\lambda_n$, this forces at least one of the coordinates   of $x_{i-1}$ and $x_i$ to belong to $\{-\lfloor n/2\rfloor,\lceil n/2\rceil-1\}$, so because the above TSP tour starts and ends at $0$, if there exists a transition, then necessarily $\TSP(S;0,0)=\ell\gtrsim  n$. 

If there are no transitions, then $\lambda_n^{\otimes 2}(x_0),\ldots,\lambda_n^{\otimes 2}(x_\ell)$ forms a valid TSP tour in $\Z^2$ that starts and ends at $0$ and covers $\lambda_n^{\otimes 2}(S)$, whence in this case $\TSP(\Lambda_n(S);0,0)\le \ell=\TSP(S;0,0)$. 

It remains to treat the case in which there are transitions, so in particular $\TSP(S;0,0)\gtrsim n$, as we noted above. Let $\mathsf{E}$ be the set of edges of $\Z^2$ consisting of those $\{\lambda_n^{\otimes 2}(x_{i-1}),\lambda_n^{\otimes 2}(x_{i})\}$ for which  $i\in \{1,\ldots,\ell\}$ is not a transition. Every connected component $C$ of  the graph $\mathsf{G}=(\{\lambda_n^{\otimes 2}(x_0),\ldots,\lambda_n^{\otimes 2}(x_\ell)\},\mathsf{E})$  intersect the boundary of $[-\lfloor n/2\rfloor,\lceil n/2\rceil-1]^2$. Indeed, otherwise  $C\subset (-\lfloor n/2\rfloor,\lceil n/2\rceil-1)^2$, so if $i\in \{1,\ldots,\ell\}$ is such that $\lambda_n^{\otimes 2}(x_i)\in C$, then $\{\lambda_n^{\otimes 2}(x_{i-1}),\lambda_n^{\otimes 2}(x_{i})\}\in \mathsf{E}$, whence also  $\lambda_n^{\otimes 2}(x_{i-1})\in C$, and by iterating this we see that $C=\{\lambda_n^{\otimes 2}(x_0),\ldots,\lambda_n^{\otimes 2}(x_\ell)\}$ and there are no transitions, contrary to our assumption. This observation shows  that if we let  $\mathsf{F}$ be the set the edges of $\Z^2$ that are contained in $\partial [-\lfloor n/2\rfloor,\lceil n/2\rceil-1]^2$, then the graph $\mathsf{H}= (\{\lambda_n^{\otimes 2}(x_0),\ldots,\lambda_n^{\otimes 2}(x_\ell)\}\cup (\Z^2\cap \partial [-\lfloor n/2\rfloor,\lceil n/2\rceil-1]^2),\mathsf{E}\cup\mathsf{F})$ is connected. As $|\mathsf{F}|\asymp n$, the graph $\mathsf{H}$ has $O(|\mathsf{E}|+n)=O(\ell+n)\asymp \ell$ edges, so it has a walk of size $O(\ell)$ that visits all of its vertices (see e.g.~\cite[exercise~1.9]{BM26} for this standard fact, which can be seen by taking a spanning tree of $H$ and traversing  it using depth-first search, or doubling the edges of $\mathsf{H}$ to get a graph all of whose vertices have even degrees, whence  it has a Eulerian circuit). We therefore have $\TSP(\Lambda_n(S);0,0)\lesssim \ell\lesssim \TSP(S;0,0),$ as required.\qed

\begin{remark}\label{rem:MoreGeneralGroups}
Let $G,H$ be infinite finitely generated groups with identity elements $e_G,e_H$, respectively, and let  $S\subset G,T\subset H$ be finite symmetric sets of generators of $G,H$, respectively. Equip $G\times H$ with the product
word metric. Then,  $\mathbb{X}_n$ embeds isometrically into  $(\Z_2\wr (G\times H))_0$, as seen by taking geodesic segments $I=\{g_0=e_{G},g_1,\ldots,g_{2n}\}\subset G$ and $J=\{h_0=e_H,h_1,\ldots,h_{2n}\}\subset H$ in the Cayley graphs of $G$ and $H$, respectively (those exist since $|G|=|H|=\infty$), and considering the following subset of $\Z_2\wr (G\times H)$: 
$$
\Big\{\big(A,(e_G,e_H)\big)\in \Z_2\wr (G\times H): A\subset I\times J\Big\},
$$ which is  isometric to $\mathbb{X}_n$ (to check this, note that every walk in     $G\times H$ can be retracted in a $1$-Lipschitz manner to $I\times J$ by the mapping that assigns $(g_{\min\{d_G(e_G,x),2n\}},h_{\min\{d_H(e_H,y),2n\}})$ to each $(x,y)\in G\times H$).  By Lemma~\ref{lem:embed in each}, this shows that the first part of Theorem~\ref{thm:MainIntro} holds with $\Z_2\wr \Z^2$ replaced by $\Z_2\wr (G\times H)$. 
\end{remark}

\section{Proof of Theorem~\ref{thm:LpcompressionIntro}}\label{sec:new zer section compression}

Here we will prove the following result, which includes Theorem~\ref{thm:LpcompressionIntro} as a special case:

\begin{theorem}\label{thm:poly growth version} Suppose that $G$ is a group that is generated by some finite symmetric $S\subset G$, and let $d_S$ be the associated left-invariant word metric on $G$. Letting  $B_{d_S}(x,r)=\{y\in G:\ d_S(x,y)\le r\}$ denote the closed $d$-ball of radius $r\in \R$ centered at $x\in G$ (thus, $B_{d_S}(x,r)=\{x\}$ if $0\le r<1$ and $B_{d_S}(x,r)=\emptyset$ if $r<0$), and letting  $e_G$ be the identity element of $G$, assume that $m\in \N$ and $0<c\le C$ satisfy: 
\begin{equation}\label{eq:poly growth assumption}
\forall r\in \N,\qquad c  r^m\le |B_{d_S}(e_G,r)|\le C r^m.
\end{equation}
Then, for every $1\le q<\infty$ we have:
\begin{equation}\label{eq:proposed-exponentintro2}
\alpha_q^*\big((\Z_2\wr G)_0\big)=\max\left\{\frac{m+q-1}{qm},\frac{m+1}{2m}\right\}.
\end{equation}
\end{theorem}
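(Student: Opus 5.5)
We prove the two inequalities separately: an upper bound on $\alpha_q^*$ via a Poincaré-type inequality on Bernoulli noise (as in Proposition~\ref{prop:quad-crossIntro}), and a lower bound via an explicit multi-scale embedding in the spirit of~\cite{NaorPeres2011}.

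\emph{Upper bound.} Suppose $f:(\Z_2\wr G)_0\to L_q$ is $1$-Lipschitz and $\|f(x)-f(y)\|_q\ge\kappa d(x,y)^\alpha$. Fix a scale $R$ and restrict $f$ to configurations supported in the ball $B=B_{d_S}(e_G,R)$, which has $|B|\asymp R^m$ by~\eqref{eq:poly growth assumption}. Identify this set of configurations with the cube $\Z_2^{B}$ and use two comparison inequalities.

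\textbf{(a) The $\alpha\le\frac{m+q-1}{qm}$ branch.} Use the $L_q$ inequality of Enflo type for Bernoulli noise versus single-coordinate flips. For $q\le 2$ this is Enflo type $q$ of $L_q$. For $q\ge2$ we instead only have type~$2$, and it is this that produces the other branch; see (b).

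The inequality reads
$$\E\|f(x+\nu^{\fp})-f(x)\|_q^q\lesssim \fp|B|\cdot\max_y\E\|f(x+e_y)-f(x)\|_q^q.$$
A single flip at $y\in B$ costs at most $O(R)$ in the metric, since we walk to $y$ and back. A Bernoulli subset of density $\fp$ costs $\gtrsim R^m\fp^{1-1/m}$, by Proposition~\ref{prop:upper growth} adapted to the ball $B$ (the lower bound of Lemma~\ref{lem:TSP lower immediate} applies verbatim using the growth bounds on $B$).

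Choosing $\fp\asymp R^{-m}$ makes both sides comparable, and the competition between $\kappa^q(R^m\fp^{1-1/m})^{q\alpha}$ and $\fp R^m R^q$ as $R\to\infty$ forces an exponent bound. Optimizing $\fp$ against $R$ (take $\fp$ constant, comparing the cost $R^{m}$ with $R^{m+q}$ in the form $R^{mq\alpha}\lesssim R^{m+q-1}$ after Enflo-type sharpening using flips at $e_G$-neighbours) should yield $\alpha\le\frac{m+q-1}{qm}$. I expect the exact balancing here to need care. The right approach is probably the Markov-type/Enflo-type argument of~\cite{NaorPeres2011}, applied to the random walk on $\Z_2\wr G$ restricted to the zero section rather than to the cube.

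\textbf{(b) The $\alpha\le\frac{m+1}{2m}$ branch.} Since $L_q$ has Markov type $2$ for $q\ge2$, and $\ell_2$ embeds into $L_q$, the same computation with exponent $2$ gives $\frac{m+1}{2m}$. It suffices to prove the bound for $q=2$ and note monotonicity.

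\emph{Lower bound (construction).} Following~\cite{NaorPeres2011}, for each dyadic scale $2^k$ partition $G$ into $d_S$-balls of radius $\asymp 2^k$; polynomial growth provides such partitions with bounded overlap, and random shifts/padded decompositions come from doubling. Map a configuration $A$ to the direct sum over $k$, and over cells $Q$ of scale $k$, of $2^{k\beta}\,\1_{\{A\cap Q\neq\emptyset\}}$, possibly together with the $L_q$-embedding of the finite cube restricted to $Q$. The TSP length of $A$ is comparable to $\sum_k 2^k\cdot\#\{\text{scale-}k\text{ cells meeting }A\}$, an Arora/Jones-type estimate valid in polynomial-growth groups. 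Tuning $\beta$ against this quantity gives compression $\frac{m+q-1}{qm}$ when $q\le2$. Composing with $L_2\hookrightarrow L_q$ gives $\frac{m+1}{2m}$ for $q\ge2$.

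The main obstacle is this lower bound. Specifically, the Lipschitz estimate requires controlling how many cells change when a single lamp is toggled, which is $O(1)$ per scale, together with the $\ell_q$-sum over scales of $2^{k\beta}$. Meanwhile, the compression estimate requires a TSP lower bound by the multi-scale cell count in a general polynomial-growth $G$, as opposed to $\Z^m$.
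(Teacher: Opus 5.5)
Both directions of your proposal have genuine gaps.

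\textbf{Upper bound.} Your test space is the cube $\Z_2^{B}$ of single lamps in $B=B_{d_S}(e_G,R)$, and it cannot give the claimed exponent. Your own numbers show this. A coordinate flip costs $\asymp R$, but flipping all of $B$ costs only $\asymp R^m$, because the walk out of $e_G$ is shared. Substituting into your inequality gives $\kappa^qR^{mq\alpha}\fp^{(1-\frac1m)q\alpha}\lesssim \fp R^{m+q}$. The exponent is linear in $\log\fp$, so only the endpoints of $[R^{-m},1]$ matter, and optimizing yields only $\alpha\le\min\{1,\frac{m+q}{qm}\}$ (respectively $\frac{m+2}{2m}$ with type~$2$). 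This is strictly weaker than the target whenever $q>1$ and $m\ge 2$. No ``sharpening using flips at $e_G$-neighbours'' is available, since almost every lamp of $B$ is at distance $\asymp R$ from $e_G$.

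The missing idea is a Hamming cube whose coordinates are \emph{clusters} of lamps, so that the diagonal costs the full sum of the edge costs. In $\Z^m$, the map $x\in\{0,1\}^{\{1,\ldots,n\}^{m-1}}\mapsto(\{1,\ldots,n\}\times\supp(x),0)$ is an $O(1)$-distortion copy of $(\{0,1\}^M,n\|\cdot\|_1)$ with $M=n^{m-1}$: each segment carries $n$ lamps and is served by an excursion of length $O(n)$. For general $G$, the paper extracts such a copy with $M\asymp n^{m-1}$ (Lemma~\ref{lem:hamming-copy}) from the failure of TSP-efficiency, via \cite{GartlandRandrianantoaninaRandrianarivony2026}. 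Enflo type $p=\min\{q,2\}$ then gives $(nM)^\alpha\lesssim\sqrt q\,nM^{1/p}$, which is exactly both branches. Also, ``prove $q=2$ and use monotonicity'' goes the wrong way for $q>2$: $L_2\subset L_q$ only yields $\alpha_q^*\ge\alpha_2^*$. You must use type~$2$ of $L_q$ directly.

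\textbf{Lower bound.} The construction as written fails for every $\beta$. A single lamp at $y$ lies in one cell at every scale, so its image has infinite norm if $\beta\ge 0$ and norm $O(1)$ if $\beta<0$. Meanwhile its distance to $(\emptyset,e_G)$ is $1+2d_S(e_G,y)$.

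Adding the natural cutoff does not rescue it. Suppose you count a scale-$2^k$ cell only if it lies at distance $\gtrsim 2^k$ from $e_G$. A lamp at distance $R$ then contributes $\asymp R^{\beta q}$ to $\|\cdot\|_q^q$, which forces $\beta\le 1$. But then $A=B_{d_S}(e_G,r)$, whose distance from $\emptyset$ is $\asymp r^m$, has image with $\|\cdot\|_q^q\lesssim\sum_{2^k\le r}2^{kq}(r/2^k)^m\lesssim r^m\log r$. This is far below the required $r^{m+q-1-o(1)}$. So weights depending only on the scale of a cell cannot work; the weight must also see the cell's distance from $e_G$.

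This is precisely the paper's new ingredient: the doubly local snapshots $A_{k,s,x}$ of \eqref{def:our snapshot}. Each is an annulus of radius $\asymp 2^k$ about $e_G$ intersected with a ball of radius $2^{s-2}$ about a point of a $2^{s-2}$-net. They are weighted by $2^{\frac{q-1}{q}k}\cdot 2^{\frac{(m-1)(2-q)+1}{qm}s}s^{-2/q}$ and mapped to orthonormal vectors $e_{k,s,x}(A_{k,s,x})$ indexed by the snapshot itself. Indicators $\1_{\{A\cap Q\neq\emptyset\}}$ would not work, since they are not functions of $U\triangle V$.

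Two further points fail. First, your claimed comparability of $\TSP$ with $\sum_k 2^k\#\{\text{cells}\}$ fails in the $\gtrsim$ direction: a segment in $\Z^2$ loses a $\log$. The paper uses only the upper bound \eqref{eq:equation 17 bound quote} and absorbs the logarithms. Second, Lipschitz continuity cannot be checked on single toggles, because $(\Z_2\wr G)_0$ with the induced metric is not geodesic: toggling one lamp at distance $R$ is already a jump of size $\asymp R$. The paper instead proves $\sum_{k\ge s}2^{\beta k}C_{k,s}(A)\lesssim_G 4^\beta 2^{-s}(\TSP(A;e_G,e_G)+1)^{\beta+1}$. This uses that for fixed $(s,x)$ at most two $k\ge s$ have $A_{k,s,x}\neq\emptyset$. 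It also uses that the net points within $2^{s-2}$ of $A$ number $O(1+\TSP(A;e_G,e_G)/2^s)$, by covering them with balls of radius $2^{s+1}$ centered every $2^s$ steps along an optimal tour.
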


We will henceforth work only in the setting of Theorem~\ref{thm:poly growth version}, so we can simplify notations by writing $e=e_G$ and $d=d_S$, and also supress the subscript $d_S$ from the notation for closed balls, as well as the super scripts $S$ and $d_S$ in the notations~\eqref{eq:lamp to TSP} for the TSP cost and the word metric (with respect to the standard generating set) on $\Z_2\wr G$  (no other metric, group, or generating set will occur, so this does not lead to ambiguity).  We will also abuse notation  by letting $\lesssim_G, \gtrsim_G,\asymp_G$ indicate the corresponding asymptotic statements holding up to constants which may depend only on $c,C,m$. 

The ensuing proof of Theorem~\ref{thm:poly growth version} builds heavily on~\cite{NaorPeres2011}, with some new ingredients. We will first explain why the upper bound on $\alpha_q^*((\Z_2\wr G)_0)$ in Theorem~\ref{thm:poly growth version} holds. If $m=1$, then there is nothing to prove because the right hand side of~\eqref{eq:proposed-exponentintro2} equals $1$. In~\cite{NaorPeres2008}, the upper bound on $\alpha_q^*((\Z\wr\Z)_0)$ in~\eqref{eq:zer0 section} is established using an observation of~\cite{ArzhantsevaGubaSapir2006} that for every $n\in \N$, the zero section of $\Z\wr\Z$ contains with distortion $O(1)$ copies of $n$-dimensional Hamming cubes $(\{0,1\}^n,\|-\cdot-\|_1)$ with their metric rescaled by $n$.  It turns out  that if $m\ge 2$, then an analogous embedding exists into the zero section of lamplighter over $G$ in which the lamps can only be on  or off; if $G=\Z^m$, then such an embedding is exhibited by assigning every $x\in \{0,1\}^{\{1,\ldots,n\}^{m-1}}$ to  $(\{1,\ldots,n\}\times \supp(x),0)\in (\Z_2\wr \Z^m)_0$. In the general setting of  Theorem~\ref{thm:poly growth version}, this can be deduced from the recent work~\cite{GartlandRandrianantoaninaRandrianarivony2026}, as explained in the following lemma:

\begin{lemma}\label{lem:hamming-copy} Suppose that we are in the setting of Theorem~\ref{thm:poly growth version}, except that we only require the first inequality in~\eqref{eq:poly growth assumption}.  For every $n\in \N$ there is $M\in \N$ with $M\asymp  c n^{m-1}$ and 
 $\Psi_n:\{0,1\}^{{M}}\longrightarrow (\Z_2\wr G)_0$ satisfying:
\begin{equation}\label{eq:hamming-rescaled}
 \forall \{x,y\}\in \{0,1\}^{M},\qquad d_{\Z_2\wr G}\big(\Psi_n(x),\Psi_n(y)\big)\asymp n \|x-y\|_1. 
\end{equation}
\end{lemma}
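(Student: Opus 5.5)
Following the hint in the text, I would mimic the $\Z^m$ construction $x\mapsto (\{1,\ldots,n\}\times\supp(x),0)$: find $M\asymp cn^{m-1}$ pairwise far-apart ``segments'' $P_1,\ldots,P_M\subset G$, each a geodesic path of length $n$, and set $\Psi_n(x)=(\bigcup_{k\in\supp(x)}P_k,e)$. Since the $P_k$ will be disjoint, $\Psi_n(x)\triangle\Psi_n(y)=\bigcup_{k\in\supp(x-y)}P_k$. By \eqref{eq:lamp to TSP}, the distance is then $n\|x-y\|_1$ plus a TSP cost, so everything reduces to showing
$$
\TSP\Big(\bigcup_{k\in K}P_k;e,e\Big)\asymp n|K|\qquad\text{for every nonempty }K\subset\{1,\ldots,M\}.
$$

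\emph{Lower bound.} Each $P_k$ has $n+1$ points, so $|\bigcup_{k\in K}P_k|\ge n|K|$. By \eqref{eq:TSP to support size} the TSP cost is at least $n|K|-1$, and together with the $|A\triangle B|$ term in \eqref{eq:lamp to TSP} this gives $d\gtrsim n\|x-y\|_1$. This holds for any disjoint choice of segments.

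\emph{Upper bound.} This is the real content: the segments must be arranged so that a single tour can visit the chosen ones cheaply. The tour traverses each chosen $P_k$ (cost $n$) and must also travel between segments. The natural arrangement is a ``comb'' packed inside $B(e,R)$ with $R\asymp n$: segments spaced about $1$ apart along a spine, as in the $\Z^m$ example, where consecutive segments $\{1..n\}\times\{z\}$ and $\{1..n\}\times\{z'\}$ are joined by a path of length $O(\|z-z'\|)$. Even then the connecting cost is only $O(n)$ per chosen segment if we order them as a tour through a grid of $n^{m-1}$ points with spacing $1$. By the TSP bound for a $k$-point subset of an $(m-1)$-dimensional grid of side $n$, the total connecting length is $\lesssim n\,k^{1-1/(m-1)}+n\le n k$, which suffices. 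For general $G$ we only know lower growth $|B(e,r)|\ge cr^m$, so we have no grid. Here I would invoke the construction of~\cite{GartlandRandrianantoaninaRandrianarivony2026} as a black box. My understanding is that it produces, in any group with $|B(e,r)|\gtrsim r^m$, a family of $\asymp cn^{m-1}$ disjoint length-$n$ geodesic segments inside $B(e,O(n))$ such that the union of any subfamily of size $k$ admits a closed tour through $e$ of length $O(nk)$. Their Hamming-cube embedding into lamplighters over such groups is exactly of the form above.

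\emph{Fallback if the cited result is not in that form.} I would build the segments directly:
\begin{enumerate}
\item Take a maximal $4n$-separated set... rather, a maximal $3$-separated set of $\asymp cn^{m-1}$ points on the sphere of radius $n$. The lower growth bound, together with the growth of annuli, supplies these points.
\item For each such point, take a geodesic from $e$ and let $P_k$ be its last $n/2$ steps.
\end{enumerate}
Visiting $k$ such segments costs $O(n)$ each by going out from $e$ and returning. This gives TSP $\lesssim nk$, though disjointness of the segments needs care. Proving disjointness under only lower growth is the main obstacle. I expect to handle it by shortening the segments to the last $n/2$ steps and accepting constant-factor losses.
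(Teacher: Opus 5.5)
There is a genuine gap. Your whole argument depends on the existence of $M\asymp cn^{m-1}$ pairwise disjoint sets in $B(e,O(n))$, each with $\gtrsim n$ points and each coverable from $e$ by a walk of length $O(n)$, and neither of your two routes produces them.

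Your primary route cites \cite{GartlandRandrianantoaninaRandrianarivony2026} for a statement that is essentially the lemma itself, plus a ``geodesic segment'' feature that nothing supports. You never name the hypothesis under which their construction runs, let alone check it, and that check is precisely the content of the paper's proof. The construction in the proof of their Theorem~3.9 applies whenever $G$ fails to be $K$-TSP-efficient. This failure, with $K\asymp cn^{m-1}$, is witnessed by $(B(e,n),e)$ versus $(\emptyset,e)$: one has $\TSP(B(e,n);e,e)\ge |B(e,n)|-1\gtrsim cn^m$ by \eqref{eq:TSP to support size} and the lower growth bound, while $\diam(B(e,n))\le 2n$. The output is disjoint sets $A_1,\ldots,A_M\subset B(e,n)$ with $\TSP(\bigcup_{j\in J}A_j;e,e)\asymp n|J|$; these are arbitrary sets, not geodesic segments.

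You have also misplaced the difficulty. Once the pieces sit in $B(e,O(n))$ and each is coverable in $O(n)$ steps, the upper bound is just a concatenation of excursions from $e$, as your own fallback observes, so the comb/grid discussion is beside the point. The real content is disjointness together with a lower bound on the size of each piece, and it is the volume $|B(e,n)|\ge cn^m$ that pays for it.

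Your fallback does not close the gap. Geodesics from $e$ to $3$-separated points of the $n$-sphere can share all but $O(1)$ of their steps. In $\Z^2$, the geodesics $(0,0)\to(n-2,0)\to(n,0)$ and $(0,0)\to(n-2,0)\to(n-2,2)$ end at distance $4$, yet their last $n/2$ steps share about $n/2$ vertices. Truncating to the last $n/2$ steps does not help, and forcing disjoint radial tails would need separation $\gtrsim n$, which leaves only $O(1)$ points. In any case, lower growth alone gives no lower bound on the size of the sphere of a prescribed radius.

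A simple repair is to abandon geodesics.
\begin{enumerate}
\item $B(e,n)$ is connected in the Cayley graph, so a depth-first traversal of a spanning tree gives a closed walk from $e$ of length at most $2|B(e,n)|$ that covers it.
\item Cut this walk into consecutive subwalks of length $n$, and let $A_j$ be the set of vertices whose first appearance lies in the $j$-th subwalk.
\item These sets are disjoint, satisfy $|A_j|\le n+1$, and each admits a tour from $e$ of length at most $3n$.
\item Since $\sum_j|A_j|=|B(e,n)|$ is spread over at most $2|B(e,n)|/n+1$ subwalks, at least $\gtrsim |B(e,n)|/n\ge cn^{m-1}$ of them have $|A_j|\ge n/8$.
\end{enumerate}
Taking those sets, your cardinality lower bound and the excursion upper bound finish the proof.
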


\begin{proof} Using the terminology of~\cite[Definition~3.2]{GartlandRandrianantoaninaRandrianarivony2026}, there is an integer $K\asymp c n^{m-1}$ such that $G$ is not $K$-TSP-efficient, i.e., there are $U,V\subset G$ and $x,y\in G$ for which $\TSP((U,x),(V,y))>K\diam(\{x,y\}\cup U\triangle V)$, where $\diam(\cdot)$ denotes the diameter with respect to $d$. Indeed,  take $x=y=e$  as well as $(U,x)=(B(e,n),e)$ and $(V,y)=(\emptyset,e)$, so $\TSP(B(e,n);e,e)\gtrsim c n^m$ by~\eqref{eq:TSP to support size} and~\eqref{eq:poly growth assumption}, while $\diam(B(e,n))\le  2n$. A verbatim instantiation  of the construction in the proof of~\cite[Theorem~3.9]{GartlandRandrianantoaninaRandrianarivony2026} to these witnesses to the failure of $K$-TSP-efficiency yields an integer $M\asymp K\gtrsim c n^{m-1}$ and pairwise disjoint  $A_1,\ldots,A_M\subset B(e,n)$ such that: 
\begin{equation}\label{eq:quote not efficient conclusion}
\forall J\subset \{1,\ldots,M\},\qquad \TSP \bigg(\bigcup_{j\in J} A_j,\emptyset \bigg)\asymp n|J|. 
\end{equation}
We can therefore define:
$$
\forall x\in \{0,1\}^M,\qquad \Psi_n(x)\eqdef \bigg(\bigcup_{j\in \supp(x)} A_j,e\bigg),
$$
so that the desired conclusion~\eqref{eq:hamming-rescaled} follows from~\eqref{eq:quote not efficient conclusion} using~\eqref{eq:lamp to TSP zero section}.   
\end{proof}

By~\cite{Enflo69,NaorSchechtmanNonlinearType}, for every $M\in \N$, every $\f:\Z_2^M\to L_q$   satisfies: 
\begin{equation}\label{eq:enflo}
 \Big(\E\big[\|\f(x+e_1+\ldots+e_M)-\f(x)\|_q^{\min \{q,2\}}\big]\Big)^{\frac{1}{\min\{q,2\}}}\lesssim \sqrt{q}\Big(\sum_{i=1}^M \E\big[\|\f(x+e_i)-\f(x)\|_q^{\min\{q,2\}}\big]\Big)^{\frac{1}{\min\{q,2\}}},
\end{equation}
where the expectations are with respect to $x\in \Z_2^M$ distributed uniformly at random.

\begin{proof}[Proof of the upper bound on $\alpha_q^*((\Z_2\wr G)_0)$ in Theorem~\ref{thm:poly growth version}] Let $f:(\Z_2\wr G)_0\to L_q$ be $1$-Lipschitz and satisfy $\|f(x)-f(y)\|_q\ge \kappa d_{\Z_2\wr G}(x,y)^\alpha$ for all $x,y\in (\Z_2\wr G)_0$ and some $\alpha,\kappa>0$. Fix $n\in \N$.  Set $\f_n=f\circ \Psi_n$, where $\Psi_n$ is as in  Lemma~\ref{lem:hamming-copy}. An application of~\eqref{eq:enflo} to $\f_n$ with the aforementioned distances guarantees (namely, the bi-Lipschitz condition for $\Psi_n$ and the compression assumption on $f$) gives the asymptotic estimate $(nM)^\alpha\lesssim \sqrt{q}nM^{1/\min\{q,2\}}$, i.e., $M^{\alpha-1/\min\{q,2\}} \lesssim \sqrt{q}n^{1-\alpha}$. If  $\alpha\le 1/\min\{q,2\}$, then a fortiori $\alpha$ is at most the right hand side of~\eqref{eq:proposed-exponentintro2}.  If $\alpha> 1/\min\{q,2\}$, then using the lower bound $M\gtrsim cn^{m-1}$ in Lemma~\ref{lem:hamming-copy} we get  $c^{\alpha-1/\min\{q,2\}} n^{(m-1)(\alpha-1/\min\{q,2\})}\lesssim \sqrt{q} n^{1-\alpha}$. This forces $(m-1)(\alpha-1/\min\{q,2\})\le 1-\alpha$ by letting $n\to \infty$, which simplifies to show that $\alpha$ is at most the right hand side of~\eqref{eq:proposed-exponentintro2}, as desired.  
\end{proof}

We will next prove the rest of Theorem~\ref{thm:poly growth version}, i.e., build the desired embedding. One difference from the reasoning in~\cite{NaorPeres2011} is that in the present higher dimensional setting we encode multi-scale structure of subsets of $G$ through  ``snapshots''   that are ``doubly local,'' i.e., they are simultaneous intersections with both a ball and an annular region. Specifically, given    $A\subset G$ and $k,s\in \N$ define $A_{k,s,x}\subset A$ by:
\begin{equation}\label{def:our snapshot}
A_{k,s,x}\eqdef A\cap \big(B(e,2^{k-1}-1)\setminus B(e,2^{k-2}-1)\big)\cap B(x,2^{s-2}).
\end{equation}
The indexing in~\eqref{def:our snapshot} is such that $B(e,2^{k-1}-1)\setminus B(e,2^{k-2}-1)=\{e\}$ if $k=1$ and $B(x,2^{s-2})=\{x\}$ if $s=1$. 
See Figure~\ref{fig:local-snapshot} below for a schematic depiction of~\eqref{def:our snapshot}. 
\begin{figure}[H]
  \centering
  \includegraphics[width=.78\linewidth]
    {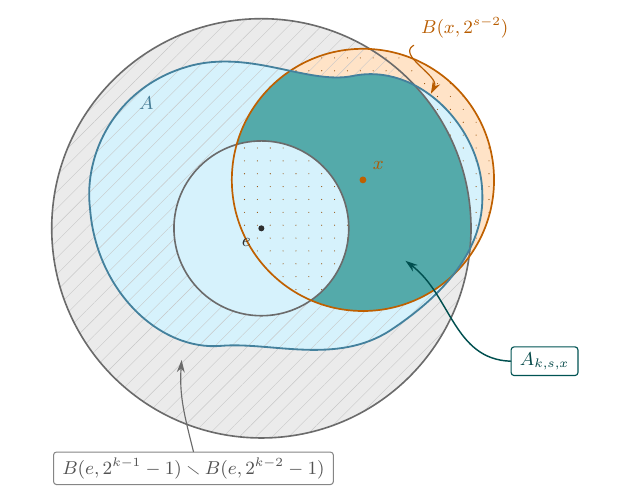}
  \caption{For $A\subset G$, two scales $k,s\in \N$, and a point $x\in G$, the region $A_{k,s,x}\subset G$  consists of those elements $y$ of $A$ that satisfy both $d(y,x)\le 2^{s-2}$ and $2^{k-2}-1< d(y,e)<2^{k-1}$. }
  \label{fig:local-snapshot}
\end{figure}
The following simple lemma records basic properties of~\eqref{def:our snapshot} that will be used later:

\begin{lemma}\label{lem:basic properties of snapshot that do not involve net} Fix  $A\subset G$ and $x\in G$. Suppose that  $k,s\in \N$ satisfy $k\ge s$ and  $A_{k,s,x}\neq \emptyset$. Then,
\begin{equation}\label{eq:two basic facts of snapshot}
\TSP(A;e,e)+1> 2^{k-2}\qquad\mathrm{and}\qquad \left\{\ell \in \{k,k+1,\ldots\}:\ A_{\ell,s,x}\neq \emptyset\right\}\subset \{k,k+1\}. 
\end{equation}
\end{lemma}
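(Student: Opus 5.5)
Both assertions in~\eqref{eq:two basic facts of snapshot} come from the triangle inequality for $d$, applied to a point $y\in A_{k,s,x}$. Fix such a $y$. By~\eqref{def:our snapshot} it satisfies $d(y,x)\le 2^{s-2}$ and $2^{k-2}-1<d(y,e)\le 2^{k-1}-1$. (When $s=1$ the condition reads $y=x$; when $k=1$ it reads $y=e$.)

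\emph{First assertion.} If $k=1$, the claim $\TSP(A;e,e)+1>2^{-1}$ holds trivially, since TSP costs are nonnegative. If $k\ge 2$, any closed tour from $e$ that visits $A$ must pass through $y$ and come back. Hence $\TSP(A;e,e)\ge 2d(e,y)$. Because $d(e,y)$ is an integer and $d(e,y)>2^{k-2}-1$, we get $d(e,y)\ge 2^{k-2}$. This gives $\TSP(A;e,e)\ge 2^{k-1}>2^{k-2}-1$, which is the first assertion; the slack here is large.

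\emph{Second assertion.} Suppose $\ell\ge k$ and $A_{\ell,s,x}\neq\emptyset$, and pick $z\in A_{\ell,s,x}$. Then $d(z,x)\le 2^{s-2}$, so $d(y,z)\le 2\cdot 2^{s-2}=2^{s-1}\le 2^{k-1}$, using $s\le k$. Combining this with the triangle inequality,
\[
d(e,z)\le d(e,y)+d(y,z)\le 2^{k-1}-1+2^{k-1}=2^k-1 .
\]
On the other hand, $z\in A_{\ell,s,x}$ forces $d(e,z)>2^{\ell-2}-1$, that is, $d(e,z)\ge 2^{\ell-2}$. Together these give $2^{\ell-2}\le 2^k-1<2^k$, so $\ell-2<k$ and therefore $\ell\le k+1$. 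With $\ell\ge k$, this yields $\ell\in\{k,k+1\}$.

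\emph{Degenerate cases.} For $s=1$ the radius convention makes the ball $\{x\}$, so $y=z=x$ and the bound $d(y,z)\le 2^{s-1}$ still holds. For $k=1$ we have $y=e$, and the same computation gives $\ell\le 2$.

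Neither step is a real obstacle. The only care needed is bookkeeping with the indexing: the conventions for $k=1$ and $s=1$, and the integrality of $d$, which turns a strict inequality into $d\ge 2^{k-2}$.
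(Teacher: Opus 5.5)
Your proof is correct and is essentially the paper's argument: a closed tour from $e$ must reach some $y\in A_{k,s,x}$, and the triangle inequality through $y$ and $x$ rules out $\ell\ge k+2$. You bound $d(e,z)$ from above, while the paper bounds $d(x,z)$ from below for $z\notin B(e,2^{\ell-2}-1)$; this is the same inequality rearranged.

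Your integrality step $d(e,y)\ge 2^{k-2}$ for $k\ge 2$ is the right way to get the strict inequality in the first assertion. The paper's written bound $d(y,e)\ge\lceil 2^{k-2}-1\rceil$ only yields a tour length $\ge 0$ when $k=2$, which does not give the strict inequality.

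In the second part, integrality is unnecessary. Your claim $d(e,z)\ge 2^{\ell-2}$ also fails for $\ell=1$, though harmlessly, since then $\ell=k$. In any case, $2^{\ell-2}-1<d(e,z)\le 2^k-1$ already gives $2^{\ell-2}<2^k$ directly.
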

\begin{proof} Consider any  $y\in A_{k,s,x}$. By~\eqref{def:our snapshot} we know that  $y\in A$ and  $d(y,e)\ge \lceil 2^{k-2}-1\rceil$. As any TSP tour that covers  $A$ visits $y$, we get $\TSP(A;e,e)\ge 2\lceil 2^{k-2}-1\rceil$, so the first part of~\eqref{eq:two basic facts of snapshot} holds.   Also, from $y\in A_{k,s,x}$ we  get that $d(x,y)\le 2^{s-2}$ and $2^{k-2}-1<d(y,e)\le 2^{k-1}-1$. If  $\ell\in\{k+2,k+3,\ldots\}$ and $z\in G\setminus B(e,2^{\ell-2}-1)$, then $d(x,z)\ge d(z,e)-d(y,e)-d(x,y)>2^{\ell-2}-1-(2^{k-1}-1)-2^{s-2}\ge 2^k-2^{k-1}-2^{s-2}\ge 2^{s-2}$, i.e., $z\notin B(x,2^{s-2})$. This shows that $A_{\ell,s,x}=\emptyset$ for every  $\ell\in\{k+2,k+3,\ldots\}$, i.e., the second part of~\eqref{eq:two basic facts of snapshot} holds as well. 
\end{proof}

Observe that second part of~\eqref{eq:two basic facts of snapshot} implies that  the following multiplicity restriction holds:
\begin{equation*}
\forall A\subset G,\ \forall (x,s)\in G\times \N,\qquad \big|\{k\in \{s,s+1,\ldots\}:\ A_{k,s,x}\neq \emptyset\}\big|\le 2. 
\end{equation*}
Consequently, because~\eqref{def:our snapshot} implies that   $A_{k,s,x}\subset A\cap B(x,2^{s-2})$ for every $A\subset G$ and $k,s\in \N$, we get: 
\begin{equation}\label{eq:multiplicity 2}
\forall A,\sub\subset G,\  \forall s\in \N,\qquad \big|\{(k,x)\in \{s,s+1,\ldots\}\times \sub :\ A_{k,s,x}\neq\emptyset\}\big| \le 2\big|\{x\in \sub:\ A\cap B(x,2^{s-2})\neq\emptyset\} \big|.
\end{equation}

For each $s\in \N$ fix a subset $\NN_s\subset G$ that is maximal with respect to inclusion relative to the requirement that $d(x,y)>2^{s-2}$ for every distinct $x,y\in \NN_s$. We thus have:
\begin{equation}\label{eq:def Ns}
 \min_{\substack{x,y\in \NN_s\\ x\neq y}} d(x,y)>2^{s-2}\qquad \mathrm{and}\qquad G=\bigcup_{x\in \NN_s} B(x,2^{s-2}).
\end{equation}
Observe that this implies that $\NN_1=G$. Furthermore,  a standard packing argument shows that thanks to the first part of~\eqref{eq:def Ns} and the  polynomial growth assumption~\eqref{eq:poly growth assumption}, we have:
\begin{equation}\label{eq:net growth}
\forall x\in G,\ \forall \sigma>0,\qquad \big|\NN_s\cap B(x,\sigma 2^{s})\big|\lesssim_G (\sigma+1)^m.
\end{equation}
(Briefly, note that by the triangle inequality for $d$ the first part of~\eqref{eq:def Ns} implies that the balls $\{B(y,2^{s-3})\}_{y\in \NN_s}$ are pairwise disjoint, and also  $\bigcup_{y\in \NN_s\cap B(x,\sigma2^{s})}B(y,2^{s-3})\subset B(x,(\sigma+1/8) 2^{s})$. Now use~\eqref{eq:poly growth assumption}.) 

For each  $A\subset G$, we will  consider the following counts of its presence at various scales and locations: 
\begin{equation}\label{eq:def our counts}
\forall k,s\in \N,\qquad C_{k,s}(A)\eqdef \big|\{x\in \NN_s:\ A_{k,s,x}\neq \emptyset \}\big|.
\end{equation}
Then, the following a priori bounds hold for every $A\subset G$:
\begin{equation}\label{eq:a priori count growth}
\forall k,s\in \N,\qquad  C_{k,s}(A)\lesssim_G2^{m(k-s)}+1. 
\end{equation}
Indeed, fix $k,s\in \N$ and  $x\in G$ with $A_{k,s,x}\neq \emptyset$.  Then, $d(x,e)\le d(x,y)+d(y,e)<2^{s-2}+2^{k-1}$ for any $y\in A_{k,s,x}$, so $\{x\in \NN_s:\ A_{k,s,x}\neq \emptyset \}\subset \NN_s\cap B(e,2^{s-2}+2^{k-1})$, whence~\eqref{eq:a priori count growth} follows from~\eqref{eq:net growth} and~\eqref{eq:def our counts}.

\begin{lemma} For every finite subset $A$ of $G$ and every $\beta\ge 0$ we have: 
\begin{equation}\label{eq:relate the counts to TSP}
\TSP(A;e,e)\lesssim_G \sum_{k=1}^{\infty}\sum_{s=1}^k 2^sC_{k,s}(A)\qquad\mathrm{and}\qquad \forall s\in \N,\qquad \sum_{k=s}^{\infty} 2^{\beta k}C_{k,s}(A)\lesssim_{G} \frac{4^\beta}{2^s}\big(\TSP(A;e,e)+1\big)^{\beta+1}.
\end{equation}
\end{lemma}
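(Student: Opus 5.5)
For the first inequality in~\eqref{eq:relate the counts to TSP} I will build an explicit closed tour from $e$ that covers $A$, and show its length is at most a constant multiple of the right-hand side.

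Fix $k\ge 1$ and let $A^{(k)}=A\cap(B(e,2^{k-1}-1)\setminus B(e,2^{k-2}-1))$ be the $k$-th annular layer. These layers partition $A$.

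Within layer $k$, I take the diagonal scale $s=k$. Then $C_{k,k}(A)$ counts the net points $x\in\NN_k$ whose balls $B(x,2^{k-2})$ meet $A^{(k)}$, and by the second part of~\eqref{eq:def Ns} these balls cover $A^{(k)}$. A single ball seems too coarse, since covering all of $A\cap B(x,2^{k-2})$ by brute force could cost about $2^{km}$. So I will use a multiscale, dyadic-tree tour, as in the proof of the analogous statement in~\cite{NaorPeres2011}.

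The tree for layer $k$ is built as follows.
\begin{itemize}
\item Its vertices at level $s$ are the points $x\in\NN_s$ with $A_{k,s,x}\neq\emptyset$, for $s=1,\dots,k$.
\item Each level-$(s-1)$ vertex $x'$ is joined to a level-$s$ vertex $x$ with $d(x,x')\le 2^{s-2}+2^{s-3}$. Such an $x$ exists: pick $y\in A_{k,s-1,x'}$ and a net point $x\in\NN_s$ with $d(x,y)\le 2^{s-2}$. Then $A_{k,s,x}\ni y$ is nonempty, and $d(x,x')\le d(x,y)+d(y,x')\le 2^{s-2}+2^{s-3}$.
\item At level $k$, each vertex is joined to $e$ at cost at most $2^{k-1}+2^{k-2}\lesssim 2^k$.
\item At level $1$ we have $\NN_1=G$, so the level-$1$ vertices are exactly the points of $A^{(k)}$.
\end{itemize}
A depth-first traversal of this tree traverses every edge twice. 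Edges into level $s$ cost $O(2^s)$, and there are at most $C_{k,s}(A)$ level-$s$ vertices, so the traversal costs $O\big(\sum_{s\le k}2^sC_{k,s}(A)\big)$ and covers $A^{(k)}$.

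Concatenating these closed tours over $k$ gives a closed tour from $e$ covering $A$, which proves the first inequality. I must check that $A_{k,s,x}$ has no boundary subtlety at $s=1$, where $B(x,2^{-1})=\{x\}$; the connecting estimate above only uses $d\le 2^{s-2}$ plus the net radius, so this is fine.

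For the second inequality, fix $s$. If $C_{k,s}(A)\neq 0$ for some $k\ge s$, then Lemma~\ref{lem:basic properties of snapshot that do not involve net} gives $2^{k-2}<\TSP(A;e,e)+1=:T$. Hence $2^{\beta k}<4^\beta T^\beta$, and so
\[
\sum_{k\ge s}2^{\beta k}C_{k,s}(A)\le 4^\beta T^\beta\sum_{k\ge s}C_{k,s}(A)\le 2\cdot 4^\beta T^\beta\,\big|\{x\in\NN_s:\ A\cap B(x,2^{s-2})\neq\emptyset\}\big|,
\]
where the last step is~\eqref{eq:multiplicity 2}. It therefore suffices to show that the number of net points $x\in\NN_s$ whose ball $B(x,2^{s-2})$ meets $A$ is $\lesssim_G T/2^s$.

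This counting step is the main obstacle. My approach is a packing argument along an optimal tour $\gamma$ of length $T-1$, which passes within distance $2^{s-2}$ of each such $x$.
\begin{itemize}
\item Chop $\gamma$ into $\lceil T/2^s\rceil$ arcs of length at most $2^s$.
\item Each arc lies in a ball of radius $2^s$, and any $x$ within $2^{s-2}$ of that arc lies in a ball of radius $2\cdot 2^s$.
\item By~\eqref{eq:net growth} with $\sigma=2$, each such ball contains $\lesssim_G 1$ points of $\NN_s$.
\end{itemize}
This gives the count $\lesssim_G \lceil T/2^s\rceil$. When $2^s>T$ there is a subtlety: the count must still be $\lesssim_G T/2^s$, which is small. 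It is resolved because then no $k\ge s$ has $C_{k,s}\neq0$, since that would require $2^{k-2}<T$ and hence $2^s\le 2^k<4T$. So in the only nontrivial case $2^s<4T$, and $\lceil T/2^s\rceil\lesssim T/2^s$. Combining the count with the display above gives the factor $4^\beta T^{\beta+1}/2^s$.
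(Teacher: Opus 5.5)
Your proof is correct. For the second inequality you follow the paper's argument essentially step for step:
\begin{itemize}
\item Lemma~\ref{lem:basic properties of snapshot that do not involve net} gives $2^{k-2}<\TSP(A;e,e)+1$, which lets you pull out the factor $4^\beta T^\beta$.
\item The multiplicity bound~\eqref{eq:multiplicity 2} reduces the problem to counting points of $\NN_s$ within distance $2^{s-2}$ of $A$.
\item Chopping an optimal tour into arcs of length $2^s$ and applying~\eqref{eq:net growth} bounds that count.
\end{itemize}
One small correction: your case split should read $2^s\ge 4T$ rather than $2^s>T$. The conclusion you draw, that only the case $2^s<4T$ matters, is right.

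For the first inequality your route differs from the paper's. The paper applies the covering-number bound~\eqref{eq:equation 17 bound quote} from~\cite{NaorPeres2011} to each annulus. It then observes that $N(A^{(k)},2^{s-2})\le C_{k,s}(A)$, since the balls around the net points counted by $C_{k,s}(A)$ cover $A^{(k)}$.

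You instead build the hierarchical tree directly on the nets. The key step is that $A_{k,s-1,x'}\neq\emptyset$ gives each level-$(s-1)$ vertex a parent $x\in\NN_s$ with $A_{k,s,x}\neq\emptyset$ and $d(x,x')\le \frac34 2^{s}$. You should phrase the cost count as charging each edge to its lower endpoint, so that every level-$s$ vertex contributes $O(2^s)$; as written, ``edges into level $s$'' is slightly off, though harmless.

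Your construction is in effect an inlined proof of~\eqref{eq:equation 17 bound quote} tailored to the snapshots. It makes the lemma self-contained and avoids the auxiliary covering numbers $N(\cdot,\cdot)$ altogether. The paper's version is shorter because it treats the multiscale TSP bound as a black box.
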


\begin{proof}

If $E\subset G$ is finite, then for  every $r\ge 0$ let $N(E,r)$ denote the minimum $n\in \N$ for which there exist $x_1,\ldots, x_n\in G$ such that $E\subset B(x_1,r)\cup\ldots\cup B(x_n,r)$. By~\cite[equation~(17)]{NaorPeres2011}  we have:
\begin{equation}\label{eq:equation 17 bound quote}
\forall E\subset B(e,2^k),\qquad \TSP(E;e,e)\lesssim \sum_{j=0}^{k} 2^j N(E,2^{j-1}). 
\end{equation}
Consequently, for every $k\in \N$ we have: 
\begin{equation}\label{eq:cover annulus}
\TSP\Big(A\cap \big(B(e,2^{k-1}-1)\setminus B(e,2^{k-2}-1)\big);e,e\Big)\lesssim \sum_{s=1}^{k} 2^s N\Big(A\cap \big(B(e,2^{k-1}-1)\setminus B(e,2^{k-2}-1)\big),2^{s-2}\Big).
\end{equation}

To apply~\eqref{eq:cover annulus}, observe that thanks to~\eqref{def:our snapshot} and  the second part of~\eqref{eq:def Ns} for every $k,s\in \N$ we have:
$$
A\cap \big(B(e,2^{k-1}-1)\setminus B(e,2^{k-2}-1)\big)\subset \bigcup_{\substack{x\in \NN_s\\ A_{k,s,x}\neq\emptyset}} B(x,2^{s-2}).
$$
Hence, $N(A\cap (B(e,2^{k-1}-1)\setminus B(e,2^{k-2}-1)),2^{s-2})\le C_{k,s}(A)$ by~\eqref{eq:def our counts}. A substitution of this into~\eqref{eq:cover annulus} gives:
\begin{equation}\label{eq:TSP of annulus}
\forall k\in \N,\qquad \TSP\Big(A\cap \big(B(e,2^{k-1}-1)\setminus B(e,2^{k-2}-1)\big);e,e\Big)\lesssim \sum_{s=1}^{k} 2^s C_{k,s}(A).
\end{equation}
Now, consider for every $k\in \N$  the optimal TSP tour that starts at $e$, covers $A\cap (B(e,2^{k-1}-1)\setminus B(e,2^{k-2}-1))$, and returns to $e$. By concatenating these walks, we get the following trivial upper bound on  $\TSP(A;e,e)$:
\begin{equation}\label{eq:TSP concatenation}
\TSP(A;e,e)\le \sum_{k=1}^\infty \TSP\Big(A\cap \big(B(e,2^{k-1}-1)\setminus B(e,2^{k-2}-1)\big);e,e\Big). 
\end{equation} 
The first part of~\eqref{eq:relate the counts to TSP} is a substitution of~\eqref{eq:TSP of annulus} into~\eqref{eq:TSP concatenation}.

We will next explain why the second part of~\eqref{eq:relate the counts to TSP} holds. Fix $s\in \N$. If $C_{k,s}(A)=0$ for every  $k\in \{s,s+1,\ldots\}$, then there is nothing to prove, so we may assume   from now that: 
\begin{equation}\label{eq:def Ks}
\mathbb{K}_s\eqdef \big\{k\in \{s,s+1,\ldots\}:\ C_{k,s}(A)\neq 0\big\}\neq \emptyset.
\end{equation}
If $k\in \mathbb{K}_s$, then $C_{k,s}(A)\neq 0$, so by~\eqref{eq:def our counts} there is $x\in \NN_s$ with $A_{k,s,x}\neq \emptyset$, whence    Lemma~\ref{lem:basic properties of snapshot that do not involve net} gives:
\begin{equation}\label{eq:TSP a priori lower s}
2^s\le 2^{\max \mathbb{K}_s}\le 4\big(\TSP(A;e,e)+1\big).
\end{equation}
Consequently, the sum that we need to bound for the second part of~\eqref{eq:relate the counts to TSP} satisfies:
\begin{align}\label{eq:take alpha out}
\begin{split}
\sum_{k=s}^{\infty} 2^{\beta k}C_{k,s}(A)&\stackrel{\eqref{eq:def Ks}}{=}\sum_{k\in \mathbb{K}_s} 2^{\beta k}C_{k,s}(A)\\&\stackrel{\eqref{eq:TSP a priori lower s}}{\le} 4^\beta \big(\TSP(A;e,e)+1\big)^\beta \sum_{k=s}^\infty C_{k,s}(A)\\&\stackrel{\eqref{eq:def our counts}}{=} 4^\beta \big(\TSP(A;e,e)+1\big)^\beta\big|\{(k,x)\in \{s,s+1,\ldots\}\times \NN_s:\ A_{k,s,x}\neq\emptyset\} \big|\\&\stackrel{\eqref{eq:multiplicity 2}}{\lesssim} 4^\beta \big(\TSP(A;e,e)+1\big)^\beta\big|\{x\in \NN_s:\ A\cap B(x,2^{s-2})\neq\emptyset\} \big|.
\end{split}
\end{align}

Set $\ell \eqdef \TSP(A;e,e)$. Fix $y_0,\ldots,y_\ell\in G$  such that $y_{i-1}^{-1}y_{i}\in S$ for every $i\in \{1,\ldots,\ell\}$, as well as $y_0=y_\ell=e$ and $A\subset \{y_0,\ldots,y_\ell\}$. For  $x\in \NN_s$ with $A\cap B(x,2^{s-2})\neq\emptyset$, fix $i(x)\in \{0,\ldots,\ell\}$ for which $d(x,y_{i(x)})\le 2^{s-2}$. Let $j(x)\in \{0,\ldots, \lfloor \ell/2^s\rfloor\}$ be the unique such index with  $j2^s \le i(x)<(j+1)2^s$. Then: $$d(x,y_{j(x)2^s})\le d(x,y_{i(x)})+\sum_{r=j(x)2^s}^{i(x)-1} d(y_{r},y_{r+1})\le 2^{s-2}+i(x)-j(x)2^s<2^{s-2}+2^s<2^{s+1},$$
i.e., $x\in B(y_{j(x)2^s},2^{s+1})$. Hence, $\{x\in \NN_s:\ A\cap B(x,2^{s-2})\neq\emptyset\ \wedge \ j(x)=j\}\subset \NN_s\cap B(y_{j2^s},2^{s+1})$ for every $j\in \{0,\ldots, \lfloor \ell/2^s\rfloor\}$. We therefore get the following estimate, which implies the rest of~\eqref{eq:relate the counts to TSP} thanks to~\eqref{eq:take alpha out}:
\begin{align*}
\big|\{x\in \NN_s:\ &A\cap B(x,2^{s-2})\neq\emptyset\} \big|=\sum_{j=0}^{\left\lfloor \frac{\ell}{2^s}\right\rfloor} \big|\{x\in \NN_s:\ A\cap B(x,2^{s-2})\neq\emptyset\ \wedge \ j(x)=j\} \big|\\&\le  \sum_{j=0}^{\left\lfloor \frac{\ell}{2^s}\right\rfloor}\big|\NN_s\cap B(y_{j2^s},2^{s+1})\big|\stackrel{\eqref{eq:net growth}}{\lesssim_G}1+\left\lfloor \frac{\ell}{2^s}\right\rfloor= 1+\left\lfloor \frac{\TSP(A;e,e)}{2^s}\right\rfloor\stackrel{\eqref{eq:TSP a priori lower s}}{\lesssim}\frac{\TSP(A;e,e)+1}{2^s}.\tag*{\qedhere}
\end{align*}
\end{proof}

\begin{proof}[Proof of the  lower bound on $\alpha_q^*((\Z_2\wr G)_0)$ in Theorem~\ref{thm:poly growth version}]  It suffices to assume that $1\le q\le 2$, as  $L_q$ contains $L_2$ isometrically (see e.g.~\cite[Proposition 6.4.2]{AlbiacKalton}). It is convenient to index the standard basis of $\ell_q$ by $e_{k,s,x}(U)$ as $(k,s,x,U)$ ranges over all possible $(k,s,x)\in \N\times \N\times G$ and $U$ ranges over all  possible finite subsets $U$ of $G$. With this notation, define $f:(\Z_2\wr G)_0\to \ell_q$ as follows:
\begin{equation}\label{eq:out higher dim NP}
\forall (A,e)\in (\Z_2\wr G)_0,\qquad f(A,e)\eqdef \sum_{k=1}^\infty 2^{\frac{q-1}{q}k}\sum_{s=1}^k \frac{2^{\frac{(m-1)(2-q)+1}{qm}s}}{s^{\frac{2}{q}}}  \sum_{x\in \NN_s} \big(e_{k,s,x}(A_{k,s,x})-e_{k,s,x}(\emptyset) \big). 
\end{equation}
Observe that only  finitely many of the summands in~\eqref{eq:out higher dim NP} are nonzero. Indeed,  as $A$ is finite, there are only finitely many $k\in \N$ for which $A$ has nonempty intersection with the annulus $B(e,2^{k-1}-1)\setminus B(e,2^{k-2}-1)$. Consequently, we need to check that for fixed $k\in \N$ and $s\in \{1,\ldots,k\}$ there are only finitely many $x\in \NN_s$ for which $A_{k,s,x}\neq \emptyset$. By~\eqref{def:our snapshot}, any such $x$ belongs to  $\bigcup_{a\in A} \NN_s\cap B(a,2^{s-2})$, which is finite by~\eqref{eq:net growth} as $|A|<\infty$.

If $U,V\subset G$ are finite and distinct, then:  
\begin{align}\label{eq:distance UV}
\begin{split}
\|f(U,e)-f(V,e)\|_q&\stackrel{\eqref{eq:def our counts}\wedge \eqref{eq:out higher dim NP}}{\asymp}\bigg(\sum_{s=1}^\infty  \frac{2^{\frac{(m-1)(2-q)+1}{m}s}}{s^2}\sum_{k=s}^\infty  2^{(q-1)k}C_{k,s}(U\triangle V)\bigg)^{\frac{1}{q}}\\&\stackrel{\eqref{eq:relate the counts to TSP}}{\lesssim}
\bigg(\sum_{s=1}^\infty \frac{ 2^{\frac{(m-1)(2-q)+1}{m}s}}{s^22^s}\bigg)^{\frac{1}{q}}\big(\TSP(U\triangle V;e,e)+1\big) \stackrel{\eqref{eq:lamp to TSP zero section}}{\asymp} d_{\Z_2\wr G}\big((U,e),(V,e)\big),
\end{split}
\end{align}
where the last step of~\eqref{eq:distance UV} uses $\frac{(m-1)(2-q)+1}{m}s-s=-\frac{m-1}{m}(q-1)s\le 0$. Thus, $f$ is $O(1)$-Lipschitz. 

To lower bound $\|f(U,e)-f(V,e)\|_q$, take the first equality in~\eqref{eq:distance UV}  and examine the largest summand:
\begin{equation}\label{eq:sum to sup}
\|f(U,e)-f(V,e)\|_q\stackrel{\eqref{eq:distance UV}}{\gtrsim} \sup_{\substack{k,s\in \N\\  s\le k}}  \frac{2^{\frac{(m-1)(2-q)+1}{qm}s+\frac{q-1}{q}k}}{s^{\frac{2}{q}}} C_{k,s}(U\triangle V)^{\frac{1}{q}}\stackrel{\eqref{eq:a priori count growth}}{\gtrsim_G} \sup_{\substack{k,s\in \N\\  s\le k}} \bigg(\frac{2^sC_{k,s}(U\triangle V)}{s^{\frac{2m}{m+q-1}}}\bigg)^{\frac{m+q-1}{qm}}.
\end{equation}
Denote $R=R_{U\triangle V}\eqdef\min\{r\in \{2,3,\ldots\}:\ U\triangle V\subset B(e,r)\}$. There is $y\in U\triangle V$ with $d(e,y)\ge R\1_{\{R\ge 3\}}$ (the indicator was inserted as if $U\triangle V\subset B(e,2)$, then $R=2$), so $\TSP(U\triangle V;e,e)\ge 2R\1_{\{R\ge 3\}}$ since any tour that starts and ends at $e$ and visits all of $U\triangle V$  must in particular visit $y$. This observation gives:
\begin{equation}\label{eq:dist at least R+1}
\log R\asymp \log(2R\1_{\{R\ge 3\}}+1)+1\stackrel{\eqref{eq:lamp to TSP zero section}}{\le} \log d_{\Z_2\wr G}\big((U,e),(V,e)\big)+1. 
\end{equation}
By~\eqref{def:our snapshot}, if $k> 3\lceil \log_2 R\rceil$, whence $2^{k-2}-1\ge R$, then  $(U\triangle V)_{k,s,x}=\emptyset$ for every $s\in \N$ and $x\in G$. Hence: 
\begin{align}\label{eq:use upper TSP bound}
\begin{split}
&d_{\Z_2\wr G}\big((U,e),(V,e)\big)\stackrel{\eqref{eq:lamp to TSP zero section}}{\asymp} \TSP(U\triangle V;e,e)+1\stackrel{\eqref{eq:relate the counts to TSP}}{\lesssim_G} \sum_{k=1}^{3\lceil \log_2 R\rceil}\sum_{s=1}^k 2^sC_{k,s}(U\triangle V)\\&\ \lesssim  \sum_{k=1}^{3\lceil \log_2R\rceil}(\log R)^{\frac{2m}{m+q-1}}\sum_{s=1}^k \frac{2^sC_{k,s}(U\triangle V)}{s^{\frac{2m}{m+q-1}}}\stackrel{\eqref{eq:dist at least R+1}}{\lesssim} \big(\log d_{\Z_2\wr G}\big((U,e),(V,e)\big)+1\big)^{\frac{2m}{m+q-1}+2} \sup_{\substack{k,s\in \N\\  s\le k}} \frac{2^sC_{k,s}(U\triangle V)}{s^{\frac{2m}{m+q-1}}}.
\end{split}
\end{align}
Note that the second step of~\eqref{eq:use upper TSP bound} is verbatim the first part of~\eqref{eq:relate the counts to TSP} if $\TSP(U\triangle V;e,e)\ge 1$, so the only point to check  is that~\eqref{eq:use upper TSP bound}  also holds when $U\triangle V=\{e\}$ (recall that we are assuming $U\neq V$), but then $R=2$ and $C_{1,1}(\{e\})=1$, as   $e\in \{e\}_{1,1,e}$ and $e\in \NN_1=G$. The  lower bound on $\alpha_q^*((\Z_2\wr G)_0)$ in Theorem~\ref{thm:poly growth version} follows by substituting~\eqref{eq:use upper TSP bound} into~\eqref{eq:sum to sup}.   
\end{proof}

\bibliographystyle{alphaabbrvprelim}
\bibliography{biblio.bib}

\end{document}